\newtheorem{theorem}{Theorem}
\newtheorem{definition}{Definition}
\newtheorem{proposition}{Proposition}
\newtheorem{characterisation}{Characterisation}
\newtheorem{corollary}{Corollary}
\newtheorem{example}{Example}
\begin{document}
\author{Matthew Terje Aadne}
\title{Kundt Structures}
\affil{Faculty of Science and Technology,\\
University of Stavanger,\\
4036 Stavanger, Norway}
\affil{matthew.t.aadne@uis.no}
\maketitle
\begin{abstract}
    In this paper we consider a new approach to studying Kundt spacetimes through $G$-structures. We define a Lie-group $GN$ such that the $GN$-structures satisfying an integrability condition and an existence criterion, which we call Kundt structures, have the property that each metric belonging to the Kundt structure is automatically a Kundt spacetime. We find that the Lie algebra of infinitesimal automorphisms of such structures is given by a Lie algebra of nil-Killing vector fields. Lastly we characterize all left invariant Kundt structures on homogeneous manifolds.
\end{abstract}
\section{Introduction}
Kundt spacetimes \cite{KundtSpacetimes,ExactSolutions,GenKundt}, have been seen to form an important class of spacetimes for the study of scalar curvature invariants on Lorentzian manifolds. In \cite{CharBySpi} and \cite{SCPI} the authors identified a large subclass, the degenerate Kundt spacetimes, for which there exists smooth deformations of the metrics leaving the scalar curvature invariants fixed while going outside the orbit of the metrics. Conversely, they also showed that in dimension four, any Lorentzian metric having such a deformation must belong to the degenerate Kundt class. They key feature which gives rise to degeneracies in the scalar polynomial curvature invariance is the presence of some null-direction, for which the curvature tensor and all its covariant derivatives are of type $II.$ An open question of current interest is whether all such metrics in arbitrary dimensions belong to the Kundt class.

Another interesting aspect is the rich connection between Kundt spacetimes and \emph{CSI} metrics,  defined as metrics for which all scalar polynomial curvature invariants are  constant across the manifold. In 
  \cite{CharBySpi} the authors proved that a four-dimensional Lorentzian manifold  with constant scalar curvature invariants,  is either locally homogeneous or a Kundt spacetime. In \cite{CSI}, \cite{CSI3} and \cite{CSI4} they found necessary and sufficient equations for Kundt metrics in dimension three and four to be $CSI.$ This classification shows examples of Lorentzian CSI metric which do not contain a single local Killing vector field, in sharp contrast to the Riemannian signature, where any CSI metric is automatically locally homogeneous. 

Locally homogeneous metrics can be characterized by having a locally transitive collection of Killing vector fields. In \cite{herviknew} the author, in an attempt to give a similar characterization of CSI metrics, gave a generalization of Killing vector fields, the nil-Killing vector fields, defined by requiring that the Lie derivative of the metric gives a nilpotent operator. It was noted that any Kundt vector field lies within this class. The nil-Killing vector fields were further studied in \cite{IDIFF} and \cite{NilKilling}, showing that under the assumption of algebraic preservation, they constitute a Lie algebra. It was seen that any CSI metric has a locally transitive collection of nil-Killing vector fields due to the local homogeneity of the transverse metric.

In this paper we use the flow properties of Kundt and nil-Killing vector fields in order to construct natural $G$-structures $P\rightarrow M$, for which they are realized as infinitesimal automorphisms of $P.$ These $G$-structure are shown to have a number of intrinsic properties. They give rise to an algebraic classification of tensors allowing for full contractions of even ranked tensors of type $II.$ In addition they have a natural class of metrics associated to them which constitute an affine space over symmetric rank two tensors of type $III.$ 

We go on to define \emph{Kundt structures} by an integrability criterion and requiring existence of certain local infinitesimal automorphisms. The collection of metrics belonging to such $G$-structures are automatically contained in the Kundt class.

Thus we obtain a machinery that allows for natural classes of metrics up to type $III$ tensors in such a way that even-ranked type $II$ tensors play a prominent role and infinitesimal automorphisms have analogous properties with the Kundt vector fields. Since all constructed deformations leaving the scalar polynomial invariants fixed have been achieved by deforming the metric in the direction of type $III,$ we hope that these $G$-structures can be employed to understand deformations of metrics, for which the curvature tensor and all its covariant derivatives are of type $II.$ 

Motivated by the characterization of CSI metrics through the application of nil-Killing vector fields, we characterize all $G$-invariant Kundt structures on homogeneous manifolds $G/H$. The idea is to present a CSI metric by a homogeneous space, in such a way that the left multiplication maps have the properties given by the flows of nil-Killing vector fields in analogy with representing a homogeneous Lorentzian manifold as the quotient of the isometry group by the isotropy group.
    
\section{G-structures}
Here we briefly present the notion of $G$-structures given in \cite{KN1} and \cite{KN2}. Suppose that $M$ is an $n$-dimensional manifold and $G\subset GL(n,\mathbb{R})$ is a Lie group. Recall that a $G$-structure on $M$ is a sub-principal bundle $P\overset{\pi}{\rightarrow}M$ with structure group $G$ of the principal frame bundle. Hence given $x\in M$ each fibre $\pi^{-1}(x)$ is a subset of $GL(\mathbb{R}^{n},T_{x}M)$ which is invariant under the right action of $G$ and on which $G$ acts freely.

If $f$ is a diffeomorphism on $M$ then $f$ induces an automorphism of the frame bundle by letting its derivative act on frames. $f$ is said to be an automorphism of the $G$-structure $P\overset{\pi}{\rightarrow} M$ if the induced automorphism of the frame bundle maps $P$ into itself. We let $Aut(P)$ denote the group of such diffeomorphisms.

A vector field $X\in \mathcal{X}(M)$ with flow $\phi_{t}$ is said to be an infinitesimal automorphism of $P\overset{\pi}{\rightarrow} M$ if $\phi_{t}\in Aut(P),$ for all $t.$

\section{The structure group}
In this section we shall discuss the structure group $GN$, which will be considered in the rest of the paper. The elements of the group $GN$ are linear transformations on Minkowski space which are characterized by two properties. They preserve the algebraic structure given by some fixed null-line $\lambda$, and arbitrary full contractions are preserved when acting on even-ranked tensors which are of type $II$ with respect to the $\lambda$. 
\newline

Consider $\mathbb{R}^{n}$ endowed with the Minkowski inner-product
\begin{equation}
    \eta= \omega^{1}\otimes\omega^{2} + \omega^{2}\otimes\omega^{1} +\omega^{3}\otimes\omega^{3}+\cdots + \omega^{n}\otimes\omega^{n},
\end{equation}
where $\{\omega^{1},\dots \omega^{n}\}$ is the dual to the standard basis $\{e_{1},\dots e_{n}\}.$ In order to follow convention we shall rename the basis elements by letting $\{k,l,m_{1}\dots m_{n-2}\}=\{e_{1},\dots e_{n}\}$.

We let $GN\subset GL(n,\mathbb{R})$ denote the Lie group of invertible linear transformations $f$ satisfying the following:
\begin{enumerate}[i)]
    \item $f(k)\in \mathbb{R}k,$
    \item $\eta(f(w),f(\tilde{w}))=\eta(w,\tilde{w}),\forall w,\tilde{w}\in \{k\}^{\perp},$
    \item $\eta(f(k),f(z))=\eta(k,z),\forall z\in \mathbb{R}^{n}.$
\end{enumerate}
Note in particular that $GN$ is not contained in $\mathcal{O}(1,n-1).$ 

The elements of $GN$ can be written as matrices of the form
\begin{equation}
    a=
    \begin{bmatrix}
    a & b_{1} & a_{1} &\dots & a_{n-2}\\
    0 & a^{-1} & 0 & \dots & 0 \\
    0 & b_{2} & c_{11} & \dots & c_{1(n-2)}\\
    \vdots & \vdots & \vdots & \ddots & \vdots\\
    0 & b_{n-1} & c_{(n-2)1} & \dots & c_{(n-2)(n-2)}
    \end{bmatrix},
\end{equation}
where $a\neq0,$ $a_{i},b_{i}$ are arbitrary numbers and $[c_{ij}]$ is an orthogonal matrix.

Letting $\lambda$ be the null-line spanned by $k$, we have the following characterizations which is a special case of a result from \cite{NilKilling} in section 2:
\begin{proposition}
Suppose that $f\in Gl(n,\mathbb{R})$, then the following are equivalent:
\begin{enumerate}[i)]
\item $f\in GN$.
\item $f(\lambda)\subset \lambda$ and $f^{*}\eta -\eta$ is of type $III$ w.r.t. $\lambda.$
\item The induced map $f^*$ on tensors preserves algebraic type and for any given even ranked tensor $T$ of type $II$ w.r.t. $\lambda,$ full contractions are preserved, i.e., \begin{equation}
    Tr(f^{*}T)=Tr(T).
\end{equation}
\end{enumerate}
\end{proposition}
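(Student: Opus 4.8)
The plan is to carry out the whole argument inside the boost--weight decomposition determined by the null line $\lambda=\mathbb{R}k$. I grade the dual basis so that $\omega^{1}$ has weight $+1$, $\omega^{2}=\eta(k,\cdot)$ has weight $-1$ and the $\omega^{2+p}$ have weight $0$; the algebraic type of a tensor is then read off from its top nonzero boost weight, with \emph{type $II$} meaning this weight is $\le 0$ and \emph{type $III$} meaning it is $\le -1$. With this bookkeeping the equivalence (i)$\Leftrightarrow$(ii) is essentially a repackaging. For a symmetric bilinear form $S$ the components of nonnegative weight are precisely $S(k,k)$ (weight $+2$), $S(k,m_{p})$ (weight $+1$) and $S(k,l),\,S(m_{p},m_{q})$ (weight $0$); hence $S:=f^{*}\eta-\eta$ is of type $III$ exactly when it vanishes on every pair taken from $(\{k\}^{\perp}\times\{k\}^{\perp})\cup(\mathbb{R}k\times\mathbb{R}^{n})$. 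These are precisely the pairs governed by conditions (ii) and (iii) of the definition of $GN$, while condition (i) of that definition is literally $f(\lambda)\subset\lambda$, so the two clauses of the proposition are two names for the same system of component equations.

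To prove (i)$\Rightarrow$(iii) I would extract the structural decomposition $f^{*}=D+N$ of the induced action on covectors, where $D$ preserves boost weight and $N$ strictly lowers it. Reading this off the matrix form, $D$ sends $\omega^{1}\mapsto a\omega^{1}$, $\omega^{2}\mapsto a^{-1}\omega^{2}$ and acts by the orthogonal block $[c_{ij}]$ on the $\omega^{2+p}$; thus $D$ is induced by an element of $\mathcal{O}(1,n-1)$, namely a boost by $a$ followed by a spatial rotation, whereas $N$ drops the weight by at least one. Extending $D+N$ to all tensor powers, the leading part is $D^{\otimes}$, which is weight preserving and invertible on each graded piece, so $f^{*}$ preserves the top nonzero boost weight of every tensor; this is the preservation of algebraic type. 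For the contraction statement I would first record the frame independent fact that a full $\eta$--contraction sees only the weight zero part $(T)_{0}$, since $Tr$ is invariant under the boost $\beta_{s}$ and $\beta_{s}$ scales the weight $b$ part by $s^{b}$. If $T$ is even ranked of type $II$ then $NT$ has strictly negative weight, whence $(f^{*}T)_{0}=D\,(T)_{0}$, and since $D\in\mathcal{O}(1,n-1)$ preserves $\eta$ and all its contractions, $Tr(f^{*}T)=Tr(D\,(T)_{0})=Tr((T)_{0})=Tr(T)$.

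The converse (iii)$\Rightarrow$(i) is where the genuine work lies, since the rigid normalization of $f$ must be reconstructed from two rather soft hypotheses. First I would apply preservation of algebraic type to $1$--forms: the forms of weight $\le 0$ are exactly the annihilator of $\lambda$, and $f^{*}$ stabilising this subspace forces $f(k)\in\mathbb{R}k$, say $f(k)=ak$; likewise the weight $-1$ forms are $\mathbb{R}\omega^{2}$, so $f^{*}\omega^{2}\in\mathbb{R}\omega^{2}$. The metric data I would then recover by feeding carefully chosen even ranked type $II$ tensors into the contraction identity. Testing on $T=\omega^{1}\odot\omega^{2}$, which has weight $0$ and so is type $II$, pins the scalar down to $f^{*}\omega^{2}=a^{-1}\omega^{2}$, i.e. $\eta(f(k),f(z))=\eta(k,z)$; this is (iii) and already kills the nonnegative weight part of $f^{*}\eta$ along $k$. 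Testing on $T=\omega^{2+p}\odot\omega^{2+q}$ forces the screen block of $f^{*}$ to be orthogonal, and together with the vanishing of the $l$--component of each $f(m_{q})$ coming from $f^{*}\omega^{2}\in\mathbb{R}\omega^{2}$ this yields $\eta(f(m_{p}),f(m_{q}))=\delta_{pq}$, which is (ii); the remaining components of (ii) involving $k$ are already contained in (iii).

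I expect the main obstacle to be exactly this converse. The hypotheses of (iii) are invariant statements quantified over all tensors, and the art is to isolate a small, sufficient family of test tensors whose contractions pin down every nonnegative weight component of $f^{*}\eta$ without redundancy, all while keeping the two boost--weight conventions --- on vectors versus covectors, and the sign induced in the pairing used by $Tr$ --- consistent throughout. A related point to handle with care is that ``preserves algebraic type'' must be the boost--order (alignment) reading: $f^{*}\eta$ itself is type $II$ but not type $III$, so the non Lorentzian part of $f$ does move tensors within a fixed top weight rather than fixing the full decomposition. Once the four component equations are in hand, the identification of (iii) with (i), and hence with (ii) via the first paragraph, is immediate.
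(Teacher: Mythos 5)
Your proof is correct, and it is worth noting at the outset that the paper itself contains no proof of this proposition: it is quoted as a special case of a result from \cite{NilKilling}, so your argument is a self-contained substitute rather than a variant of an in-paper argument. Your route is the natural one and all the key steps check out. The equivalence $i)\Leftrightarrow ii)$ is indeed pure bookkeeping, since type $III$ for a symmetric $2$-tensor is exactly vanishing on $(\{k\}^{\perp}\times\{k\}^{\perp})\cup(\mathbb{R}k\times\mathbb{R}^{n})$, which reproduces conditions $ii)$ and $iii)$ in the definition of $GN$ while $f(\lambda)\subset\lambda$ is condition $i)$. For $i)\Rightarrow iii)$, the splitting $f^{*}=D+N$ with $D$ induced by a $Sim(n-2)$ element (boost composed with the orthogonal screen block) and $N$ strictly weight-lowering is exactly the right mechanism: $D^{\otimes}$ is invertible on each graded piece, so boost order is preserved; for even-ranked type $II$ tensors one gets $(f^{*}T)_{0}=D^{\otimes}(T)_{0}$, and the boost-invariance argument ($Tr(\beta_{s}^{*}T)=\sum_{b}s^{b}\,Tr((T)_{b})$ constant in $s$ forces $Tr((T)_{b})=0$ for $b\neq 0$) correctly reduces everything to the weight-zero part, on which $D$ acts as an $\eta$-isometry. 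For the converse, your reconstruction works exactly as claimed: type preservation on $1$-forms gives $f(k)=\alpha k$ and $f^{*}\omega^{2}=\mu\omega^{2}$; the test tensor $\omega^{1}\odot\omega^{2}$ gives $\mu\alpha=1$, which unwinds to $\eta(f(k),f(z))=\eta(k,z)$; the tensors $\omega^{2+p}\odot\omega^{2+q}$ give $CC^{T}=I$ for the screen block, and since $C$ is square this is equivalent to $C^{T}C=I$, which together with $\omega^{2}(f(m_{q}))=0$ yields $\eta(f(m_{p}),f(m_{q}))=\delta_{pq}$; these are precisely the three defining conditions of $GN$. Your closing caveats are also the right ones: ``preserves algebraic type'' must be read as preservation of boost order (the filtration, not the grading), and if you write this up you should state explicitly that the same $D+N$ splitting on vectors handles mixed tensors, which the proposition nominally covers but your text treats only on covariant slots.
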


In particular we see from \cite{NilKilling} that if $f\in GN$ and $T$ is any even-ranked type $II$ tensor, then the metrics $\eta$ and $f^{*}\eta$ induce the same full contractions of $T.$

\section{GN-structures}\label{KSGNStruct}

In this section we shall study $GN$-structures, and show that they induce an algebraic classification of tensors in analogy with the boost-order classification induced by a Lorentzian metric with a null-distribution \cite{AlgClass}. Moreover, we shall see that each $GN$-structure gives rise to a collection of metrics which forms an affine space over symmetric rank $2$ tensors of type $III$ in the algebraic classification.
\newline

We start by showing that to each $GN$-structure, $P\overset{\pi}{\rightarrow} M$, we can associate two distributions $\lambda,\Lambda$ on $M$ of dimension $1$ and codimension $1,$ respectively.

Given $x\in M$ and $u,\tilde{u}\in \pi^{-1}(x),$ there exists an element $a\in GN$ such that $\tilde{u}=ua.$ Since $a$ leaves the subspaces $\mathbb{R}k$ and $\{k\}^{\perp}$ invariant it follows that $$\tilde{u}(\mathbb{R}k)=ua(\mathbb{R}k)=u(\mathbb{R}k)$$ and $$\tilde{u}(\{k\}^{\perp})=ua(\{k\}^{\perp})=u(\{k\}^{\perp}).$$ Thus at each point of $x\in M$ we have a two well-defined subspaces 
\begin{equation}
    \lambda_{x} :=\{u(\mathbb{R}k):u\in \pi^{-1}(x)\}\subset T_{x}M,
\end{equation}
\begin{equation}
    \Lambda_{x} :=\{u(\{k\}^{\perp}):u\in \pi^{-1}(x)\}\subset T_{x}M,
\end{equation}
of dimension one and codimension one respectively. We let $\lambda$ and $\Lambda$ be the distributions defined by these subspaces at each point in $M,$ and refer to them as the distributions associated with the $GN$-structure $P\overset{\pi}{\rightarrow} M$. 

On the cotangent bundle we let $\lambda^{*}$ and $\Lambda^{*}$ be the annihilators of $\Lambda$ and $\lambda$ respectively, i.e.,
\begin{equation}
    \lambda^{*}_{x}=\{\omega\in T^{*}_{x}M:\omega(X)=0, \, \forall \, X\in \Lambda_{x}\}
\end{equation}
and
\begin{equation}
    \Lambda^{*}_{x}=\{\omega\in T^{*}_{x}M:\omega(X)=0, \, \forall \, X\in \lambda_{x}\}.
\end{equation}
Using this we give an algebraic classification of tensors as follows: Let 
\begin{equation}
TM^1=TM,\quad TM^0=\Lambda,\quad TM^{-1}=\lambda
\end{equation}
and
\begin{equation}
    T^{*}M^{1}=TM^*,\quad T^{*}M^{0}=\Lambda^{*},\quad T^{*}M^{-1}=\lambda^{*}.
\end{equation}
Let $\mathcal{D}^{r}_{t}(M)$ denote the $C^{\infty}(M)$-module of tensors with $r$ covariant and $t$ contravariant factors. Following standard terminology from \cite{AlgClass}  we define the sub-module of tensors with boost-order $s$ by
\begin{equation}\label{KSBoostorder}
    \bigoplus_{\substack{s_{1}+\cdots +s_{r+t}=s\\s_{i}\in \{-1,0,1\}}}T^{*}M^{s_{1}}\otimes \cdots\otimes T^{*}M^{s_{r}}\otimes TM^{s_{r+1}}\otimes TM^{s_{r+k}}.
\end{equation}
Tensors of boost-order $0$ and $-1$ are said to be of type $II$ and $III$ respectively. Thus $GN$-structures induce an algebraic boost-order classification of tensors similar to that of a Lorentzian manifold with a given null-distribution.

Next we proceed to show that each $GN$-structure induces a natural collection of Lorentzian metrics. Recall that the Lie subgroup $Sim(n-2)$ of $\mathcal{O}(1,n-1)$ is given by
\begin{equation}
    Sim(n-2):=\{f\in \mathcal{O}(1,n-1):f(k)\in \mathbb{R}k\}.
\end{equation}
We see that $Sim(n-2)$ is a Lie subgroup of $GN,$ satisfying the relation
\begin{equation}
    Sim(n-2)=GN\cap\mathcal{O}(1,n-1).
\end{equation}
Each $Sim(n-2)$-structure $Q\rightarrow M$ induces a metric on $M$ by letting the elements of $Q$ be null-bases for $g$. Using this we have the following definition.

\begin{definition}
Let $P\rightarrow M$ be a $GN$-structure on $M.$ A metric $g$ on $M$ is said to belong to $P$ if it is determined by a $Sim(n)$-subprincipal bundle of $P.$ The collection of such metrics is denoted by $\mathcal{M}.$
\end{definition}

\begin{characterisation}\label{metricsbelonging}
Let $P\rightarrow M$ be a $GN$-structure with associated distributions $\lambda$ and $\Lambda$.  A metric $g$ belongs to $P$ if and only if with respect to $g$ the following are satisfied:
\begin{enumerate}[i)]
    \item $\lambda$ is a null distribution.
    \item $\lambda^{\perp}=\Lambda.$
    \item If $(X,Y,Z_{i})\in P$, then $g(X,Y)=1$ and $g(Z_{i},Z_{j})=\delta_{ij},$ for all $i,j.$
\end{enumerate}
\end{characterisation}
\begin{proof}
Suppose first that $g$ satisfies the three conditions $i)-iii)$. By $i)$, $\lambda$ is a null-distribution with respect to this metric. Let $Q\rightarrow M$ be the $Sim(n)$-structure defined as the collection of null-frames with respect to $g$ whose first frame element points along $\lambda.$ The proof of the implication will be completed if we show that $Q\subset P.$ Now given a point $p\in M$ and an element $(X,Y,Z_{i})$ in $P$ in the fibre above $p$, then $ii)$ and $iii)$ imply that we can find a vector $\tilde{Y}$ such that $(X,\tilde{Y},Z_{i})$ is a null-frame for $g$ and $Y=cX+\tilde{Y}+d^{k}m_{k},$ for some coeffisients $d^{k}$, for $k=1\dots n-2$. Now define $a\in GL(\mathbb{R}^{n})$ by its action on the standard null-frame $\{k,l,m_{i}\}$ on $\mathbb{R}^{1,n-1}$ by $$a(l)=ck+l +d^{i}m_{i},\quad a(k)=k, \quad a(m_{i})=m_{i},$$ for all $i.$ Then clearly $a\in GN$ and $(X,\tilde{Y},Z_{i})a=(X,Y,Z_{i})$. It follows that $(X,\tilde{Y},Z_{i})\in P$ and therefore the fibre of $Q$ above $p$ is contained in $P$, which shows that $Q\subset P,$ thus the metric belongs to $P.$
    
\end{proof}

\begin{proposition}\label{KSaffine}
Given a $GN$-structure $P\rightarrow M$, the collection $\mathcal{M}$ of metrics belonging to $P$ is a non-empty affine space with respect to covariant symmetric rank $2$ tensors of type $III$.
\end{proposition}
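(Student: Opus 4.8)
The plan is to reduce the statement to three facts which together amount to a free transitive action of the vector space of covariant symmetric type $III$ tensors on $\mathcal{M}$: that $\mathcal{M}$ is non-empty, that the difference $g_{2}-g_{1}$ of any two members is a covariant symmetric type $III$ tensor, and that $g+S$ lies in $\mathcal{M}$ whenever $g\in\mathcal{M}$ and $S$ is such a tensor. The computational engine for all three is the pointwise metric $g_{u}$ attached to a frame $u\in\pi^{-1}(x)$ by $g_{u}(X,Y)=\eta(u^{-1}X,u^{-1}Y)$. Two frames $u$ and $ua$ in the same fibre are related by $g_{ua}=g_{u}+(u^{-1})^{*}\bigl((a^{-1})^{*}\eta-\eta\bigr)$, and since $a^{-1}\in GN$, Proposition 1 tells us that $(a^{-1})^{*}\eta-\eta$ is of type $III$ with respect to $\lambda$. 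Because $u$ sends the null line of $\mathbb{R}^{n}$ to $\lambda_{x}$ and $\{k\}^{\perp}$ to $\Lambda_{x}$, the correction $g_{ua}-g_{u}$ is type $III$ with respect to the associated distributions $\lambda$ and $\Lambda$. This one identity drives everything.

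For non-emptiness I would argue by reduction of the structure group. A metric belonging to $P$ is precisely a $Sim(n-2)$-subbundle $Q\subset P$, and such reductions correspond to global sections of the associated bundle $P/Sim(n-2)\to M$ with fibre $GN/Sim(n-2)$. Using the identity above, together with the explicit null-rotation elements from the proof of Characterisation \ref{metricsbelonging} (those fixing $k$ and the $m_{i}$ and sending $l\mapsto ck+l+d^{i}m_{i}$), I would identify $GN/Sim(n-2)$ with the space of type $III$ symmetric rank $2$ tensors; in particular the fibre is diffeomorphic to $\mathbb{R}^{n-1}$ and hence contractible. A fibre bundle with contractible fibre over a paracompact manifold admits a global section, so the reduction exists and $\mathcal{M}\neq\emptyset$.

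For the affine structure I would verify the two complementary claims. Given $g_{1},g_{2}\in\mathcal{M}$ with subbundles $Q_{1},Q_{2}$, I pick frames $u\in Q_{1}$ and $v\in Q_{2}$ over a common point, write $v=ua$, and read off from the identity that $g_{2}-g_{1}=g_{v}-g_{u}$ is type $III$ at that point; as the point is arbitrary and $\lambda,\Lambda$ are globally defined, $g_{2}-g_{1}$ is a covariant symmetric type $III$ tensor field. Conversely, for $g\in\mathcal{M}$ and $S$ of type $III$ I would check that $g':=g+S$ satisfies conditions i)--iii) of Characterisation \ref{metricsbelonging}. The essential algebraic inputs are that a symmetric type $III$ tensor annihilates $\lambda$ and restricts to zero on $\Lambda\times\Lambda$, i.e. its only nonzero frame components are the $(l,l)$ and $(l,m_{i})$ slots. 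The first property yields $g'(\lambda,\lambda)=0$ and $\lambda^{\perp_{g'}}=\lambda^{\perp_{g}}=\Lambda$, and for any frame $(X,Y,Z_{i})\in P$ it gives $g'(X,Y)=g(X,Y)=1$ since $X\in\lambda$; the second yields $g'(Z_{i},Z_{j})=g(Z_{i},Z_{j})=\delta_{ij}$ since $Z_{i}\in\Lambda$. Inspecting these components also shows $g'$ stays non-degenerate Lorentzian, so $g'\in\mathcal{M}$, and the representing tensor $S=g_{2}-g_{1}$ is manifestly unique, giving a free transitive action.

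The hard part will be the non-emptiness step. The two affine statements are essentially forced by Proposition 1 and Characterisation \ref{metricsbelonging}, but existence needs the genuinely global input that $GN/Sim(n-2)$ is contractible so that the structure group can be reduced. The supporting point I would pin down carefully is the identification of type $III$ symmetric rank $2$ tensors with those annihilating $\lambda$ and vanishing on $\Lambda\times\Lambda$ — equivalently, with frame components only in the $(l,l)$ and $(l,m_{i})$ slots — since this is exactly what makes the dimension count $\dim\bigl(GN/Sim(n-2)\bigr)=n-1$ and the verification of condition iii) both go through.
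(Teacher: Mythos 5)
Your proposal is correct, and the affine-structure half coincides in substance with the paper's own proof: the paper verifies the two containments by evaluating $\tilde{g}-g$ and $g+T$ on frames $(X,Y,Z_{i})\in P$ and citing Characterisation \ref{metricsbelonging}, which is exactly your computation with the frame identity $g_{ua}-g_{u}=(u^{-1})^{*}\bigl((a^{-1})^{*}\eta-\eta\bigr)$ standing in for the componentwise check. Where you genuinely diverge is non-emptiness. The paper glues: it takes local sections of $P$, lets each define a local metric $g_{\alpha}$, and sets $g=\sum_{\alpha}\psi_{\alpha}g_{\alpha}$ for a subordinate partition of unity; this remains a Lorentzian metric belonging to $P$ because locally $g=g_{\beta}+\sum_{\alpha}\psi_{\alpha}(g_{\alpha}-g_{\beta})$ with each difference symmetric of type $III$, so the already-proved affine property applies --- convex combination of arbitrary Lorentzian metrics would fail here, and it is precisely the type $III$ (nilpotent) nature of the differences that saves it. You instead reduce the structure group: metrics belonging to $P$ are $Sim(n-2)$-reductions, i.e.\ sections of $P/Sim(n-2)\to M$, and the fibre $GN/Sim(n-2)$ is identified with the space of symmetric type $III$ tensors $\cong\mathbb{R}^{n-1}$, so a section exists over a paracompact base; this is the standard reduction theorem (Kobayashi--Nomizu I, stated for fibres homeomorphic to Euclidean space, which your explicit identification supplies --- though note the elements $l\mapsto ck+l+d^{i}m_{i}$, $k\mapsto k$, $m_{i}\mapsto m_{i}$ you borrow are not null rotations, as they do not preserve $\eta$). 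Your route invokes heavier but standard machinery and isolates the structural reason existence holds; the paper's route is elementary and self-contained. In fact the two are the same argument at bottom: your fibre identification exhibits $P/Sim(n-2)\to M$ as an affine bundle modeled on the vector bundle $\lambda^{*}\otimes_{s}\Lambda^{*}$, and the partition-of-unity construction of a section of an affine bundle is exactly what the paper writes out by hand.
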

\begin{proof}
Suppose that $g\in \mathcal{M}$ and $T$ is a symmetric two-tensor of type $III.$ Let $\lambda$ and $\Lambda$ be the distributions induced by $P$. By definition we see that $T\in \lambda^{*}\otimes_{s}\Lambda^{*}.$ It follows easily that $\lambda$ is a null-distribution for the metric $g+T$ such that $\lambda^{\perp}=\Lambda.$ Furthermore, if $(X,Y,Z_{i})\in P$, then $T(X,Y)=0$ since $X\in\lambda$. Moreover ${Z_{i}}\subset \Lambda$ and hence $T(Z_{i},Z_{j})=0,$ for all $i,j$. Thus by the above characterisation $g+T$ belongs to $P.$

Now suppose that $\tilde{g}\in \mathcal{M}$. If $(X,Y,Z_{i})\in P,$ then
\begin{equation}
    (\tilde{g}-g)(X,Y)=\tilde{g}(X,Y)-g(X,Y)=0,
\end{equation}
and 
\begin{equation}
     (\tilde{g}-g)(Z_{i},Z_{j})=\tilde{g}(Z_{i},Z_{j})-g(Z_{i},Z_{j})=\delta_{ij} -\delta_{ij}.
\end{equation}
It follows readily that $\tilde{g}-g\in \lambda^{*}\otimes_{s}\Lambda^{*}$ and hence is a tensor of type $III.$

Now let us show that the collection $\mathcal{M}$ of metrics belonging to a $GN$-structure $P\rightarrow M$ is always non-empty. Given a point $p\in M$ we can find a neighborhood $U$ and a local section $s:U\rightarrow P.$ Let $g$ be the unique Lorentzian metric on $U$ such that $s(q)$ is a null-frame for $g,$ for all $q\in U.$ Then $g$ belongs to the $GN$-structure given by the restriction of $P$ to $U.$

Proceeding in this way we can find a locally finite covering $\{U_{\alpha}\}_{\alpha \in I}$ of $M$ and metrics $g_{\alpha}$ on $U_{\alpha}$ belonging to the restriction of $P$ to $U_{\alpha},$ for all $\alpha\in I.$ Now let ${\psi_{\alpha}}_{\alpha\in I}$ be a partition of unity subordinate to $\{U_{\alpha}\}_{\alpha \in I}$. Since any two metrics $g_{\alpha},g_{\beta}$ differ by a tensor of type $III$ on a non-empty intersection $U_{\alpha}\cap U_{\beta},$ it follows that 
\begin{equation}
    \sum_{\alpha\in I}\psi_{\alpha}g_{\alpha}
\end{equation}
gives a well-defined Lorentzian metric which belongs to $P.$ This finishes the proof.
\end{proof}

We have another characterisation of $GN$-structures in terms of certain quadruples which will be needed later. On a manifold $M$ of dimension $n$ a \emph{null-quadruple} is defined to be a quadruple $(\lambda,\Lambda,g^{\perp},f)$ consisting of the following datum: 
\begin{enumerate}[i)]
    \item Two distributions $\lambda$ and $\Lambda$ of dimension $1$ and $n-1$ respectively such that $\lambda\subset \Lambda.$
    \item A symmetric positive-definite bilinear form $g^{\perp}$ on the quotient bundle $\Lambda/\lambda.$
    \item A non-vanishing one-form $f$ on the bundle $\lambda\otimes TM/\Lambda.$
\end{enumerate}

\begin{proposition}
On a manifold $M$ there is a $1:1$ correspondence  between the collection of $GN$-structure $P\rightarrow M$ and null-quadruples $(\lambda,\Lambda,g^{\perp},f).$ 
\end{proposition}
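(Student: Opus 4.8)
The plan is to construct two mutually inverse maps: one sending a $GN$-structure to a null-quadruple, and one sending a null-quadruple back to a $GN$-structure, and then to verify that these constructions are inverse to one another. I would begin with the forward direction. Given a $GN$-structure $P\rightarrow M$, we already have from the preceding discussion the two associated distributions $\lambda$ and $\Lambda$ of dimension $1$ and $n-1$, with $\lambda\subset\Lambda$ since $\mathbb{R}k\subset\{k\}^{\perp}$ is $GN$-invariant; this supplies datum $i)$. To extract $g^{\perp}$, I would take any frame $u=(X,Y,Z_i)\in\pi^{-1}(x)$ and declare the images of the $Z_i$ to form an orthonormal basis of $\Lambda_x/\lambda_x$. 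The key point is well-definedness: two frames in the same fibre differ by an element $a\in GN$, whose lower-right block $[c_{ij}]$ is orthogonal, so the induced bilinear form on the quotient $\Lambda_x/\lambda_x$ is unchanged. Similarly, for $f$ I would use that $a$ acts on the $k$-component by the scalar $a$ and on the $TM/\Lambda$-direction (the $l$-component) by $a^{-1}$, so the pairing $k\otimes l$ is invariant; this gives a well-defined non-vanishing element of $\lambda\otimes TM/\Lambda$, i.e.\ datum $iii)$.

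Next I would construct the reverse map. Given a null-quadruple $(\lambda,\Lambda,g^{\perp},f)$, I would define a subset $P\subset GL(\mathbb{R}^n,T_xM)$ fibrewise by declaring a frame $u=(X,Y,Z_i)$ to lie in $\pi^{-1}(x)$ precisely when $X$ spans $\lambda_x$, the $Z_i$ project to a $g^{\perp}$-orthonormal basis of $\Lambda_x/\lambda_x$, $Y$ projects onto the distinguished generator of $TM/\Lambda$ selected by pairing against $f$ (normalised so that $f$ evaluates correctly on $X\otimes Y$), and the compatibility $Y\in$ (appropriate complement) holds. I would then check that this set is nonempty and that $GN$ acts freely and transitively on each fibre, which makes $P$ a genuine $GN$-subbundle of the frame bundle. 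The freeness is automatic from freeness in $GL(n,\mathbb{R})$; transitivity is the substantive verification, requiring that any two admissible frames differ by a matrix of the stated block form, with orthogonal $[c_{ij}]$-block and reciprocal $a,a^{-1}$ entries — exactly the defining shape of $GN$.

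The main obstacle I anticipate is handling the middle component $Y$ cleanly. Unlike $\lambda$ and the quotient $\Lambda/\lambda$, there is no canonical complement to $\Lambda$ inside $TM$, and the $l$-row of a $GN$-matrix allows $Y$ to be shifted by arbitrary multiples of $X$ and of the $Z_i$ (the entries $b_1,a_1,\dots,a_{n-2}$ in the top row, together with the $b_i$ in the $l$-column). I must therefore verify that the ambiguity in choosing $Y$ for a frame in $P$ is \emph{exactly} the ambiguity permitted by $GN$, neither more nor less, so that the fibre is a single $GN$-orbit. This amounts to checking that the only constraints the quadruple places are on $\lambda$, on $g^{\perp}$, and on the $f$-normalisation, and that all remaining freedom in $(X,Y,Z_i)$ is absorbed by the off-diagonal entries of $GN$.

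Finally I would verify that the two constructions are mutually inverse. Starting from a $GN$-structure, extracting $(\lambda,\Lambda,g^{\perp},f)$ and rebuilding $P$ recovers the original bundle because the admissibility conditions in the reverse map are precisely the fibrewise invariants computed in the forward map; conversely, starting from a quadruple, building $P$ and reading off its invariants returns $(\lambda,\Lambda,g^{\perp},f)$ by construction. I expect both round-trips to follow formally once well-definedness and transitivity above are established, so the proof reduces to the linear-algebraic bookkeeping of matching the block structure of $GN$ against the three pieces of datum in the quadruple.
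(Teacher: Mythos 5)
Your proposal follows essentially the same route as the paper's proof: extract $(\lambda,\Lambda,g^{\perp},f)$ from a $GN$-structure using the invariance of the block form of $GN$ (orthogonal $[c_{ij}]$-block for $g^{\perp}$, the reciprocal $a,a^{-1}$ entries for $f$), then rebuild the fibres from a quadruple as the frames $(X,Y,Z_i)$ with $X\in\lambda$, $f(X\otimes[Y])=1$ and the $Z_i$ projecting to a $g^{\perp}$-orthonormal basis, verifying $GN$-invariance and transitivity exactly as you outline. Your worry about the middle component $Y$ is resolved just as you predict — the paper shows the shifts $Y\mapsto cX+b^{-1}Y+d^{j}Z_j$ (and $Z_i\mapsto f^{i}X+\tensor{S}{_i^m}Z_m$) are precisely the $GN$ freedom — and your phrasing of $f$ as a non-vanishing section of the line bundle $\lambda\otimes TM/\Lambda$ rather than a functional on it is an equivalent formulation.
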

\begin{proof}
If $P\rightarrow M$ is a $GN$-structure on $M$, let $\lambda$ and $\Lambda$ be the associated distributions. If $p\in M$ then taking any element $(X,Y,Z_{i})\in P$ above $p$ we see that $(X,Z_{i})$ determines a degenerate symmetric bilinear form $\beta$ on $\Lambda_{p}$ by setting $\beta(X,X)=0$ and $\beta(Z_{i},Z_{i})=1.$ Due to the form of $GN$ this gives a well-defined degenerate symmetric two-form on $\Lambda$ which is independent of choice of elements in $P.$ Moreover $\beta$ induces a positive definite symmetric bilinear form $g^{\perp}$ on the quotient bundle $\Lambda/\lambda$. 

Given $p\in M,$ we define the map $f_{p}:\lambda_{p}\otimes T_{p}M/\Lambda_{p}\rightarrow\mathbb{R}$ as follows: Let $(X,Y,Z_{i})$ be an element of $P$ in the fibre above $p$ whose first entry is $X.$ Then we set \begin{equation}f_{p}(X\otimes[Y])=1,\end{equation} where $[Y]$ is the coset of $Y$ in $T_{p}M/\Lambda_{p}$, and extent $f_p$ to $\lambda_{p}\otimes T_{p}M/\Lambda_{p}$ by linearity. This map is well-defined since if $(\tilde{X},\tilde{Y},\tilde{Z}_{i})$ is another element of $P$, then due to the form of $GN$ we see that $\tilde{X}=bX$ $\tilde{Y}=cX+b^{-1}Y+d^{i}Z_{i}$, for some $b,c,d^{i}\in \mathbb{R}$ which implies that $\tilde{X}\otimes[\tilde{Y}]=X\otimes[Y].$ Hence $(\lambda,\Lambda,g^{\perp},f)$ defines a null-quadruple. 

Conversely, suppose we have a null-quadruple $(\lambda,\Lambda,g^{\perp},f)$. At each point $p\in M$ define the $P_{p}$ to be the collection of bases $(X,Y,Z_{i})$ for $T_{p}M$ such that $X\in \lambda_{p}$, $f(X\otimes[Y])=1$ and the classes of $Z_{i}$ in $\Lambda_{p}/\lambda_{p}$ form an orthonormal basis with respect to $g^{\perp}$. Now if $a\in GN$ and $(X,Y,Z_{i})\in P_{p}$ , the letting $(\tilde{X},\tilde{Y},\tilde{Z}_{i}):=(X,Y,Z_{i})a,$ we know that $$(\tilde{X},\tilde{Y},\tilde{Z}_{i})=(bX,cX+b^{-1}Y+d^{j}Z_{j},f^{i}k + \tensor{S}{_i^m}Z_{m}),$$
where $b,c,d^{k},f^{k}$ are constants and $\tensor{S}{_i^m}$ is an orthogonal $(n-2)\times(n-2)$ matrix. Clearly \begin{equation}
    f(\tilde{X}\otimes[\tilde{Y}])=f((bX)\otimes[cX+b^{-1}Y+d^{j}Z_{j}])=f(X\otimes[Y])=1,
\end{equation} and since $\tensor{S}{_{i}^m}$ is orthogonal, the classes of $\tilde{Z}_{i}$ are again an orthonormal basis for $g^{\perp}$. Thus $(\tilde{X},\tilde{Y},\tilde{Z}_{i})\in P_{p},$ from which it follows that $P_{p}$ is invariant under the right action of $GN.$ 

Let us show that $GN$ acts transitively on $P_{p}$. Given any other element $(X^{\prime},Y^{\prime},Z_{i}^{\prime})$, then by injectivity of $f$ it follows that $X^{\prime}\otimes[Y^{\prime}]=X\otimes[Y]$ which implies that $X^{\prime}=bX$ and $Y^{\prime}=cX+b^{-1}Y+d^{j}Z_{j}$ for some constants $b,c,d^{i}$. Using this and the fact that $Z^{\prime}_{i}$ also gives an orthonormal basis for $g^{\perp}$ it is clear that we can construct $a\in GN$ such that $(X^{\prime},Y^{\prime},Z^{\prime}_{i})=(X,Y,Z_{i})a.$ Thus it follows that $\cup_{p\in M}P_{p}$ is a GN-structure. 

Clearly the correspondence between $GN$-structures and null-quadruples is a bijection.
\end{proof}

 It is useful to characterize automorphisms and metrics belonging to $GN$-structures through this correspondence. Suppose therefore that $P\rightarrow M$ is a $GN$-structure corresponding a null-quadrupe $(\lambda,\Lambda,g^{\perp},f)$. 
 
 A diffeomorphism $\phi:M\rightarrow M$ is an automorphism of $P$ if and only if $\phi_{*}$ satisfies the following:
 \begin{enumerate}[i)]
     \item $\phi_{*}(\lambda)=\lambda$ and $\phi_{*}(\Lambda)=\Lambda,$
     \item $\phi^{*}g^{\perp}=g^{\perp},$
     \item $\phi^{*}f=f.$
 \end{enumerate}
 where the pull-backs in $ii)$ and $iii)$ are well-defined due to $i)$.
 
Furthermore it is clear from characterization \ref{metricsbelonging} that a metric $g$ belongs to $P$ if and only if 
\begin{enumerate}[i)]
    \item $g(W,W^{\prime})=g^{\perp}([W],[W^{\prime}])$, for all $W,W^{\prime}\in \Lambda$,
    \item $g(X,Y)=f(X\otimes[Y]),$ for all $X\in \lambda$ and $Y\in TM$.
\end{enumerate}
 
 Let us now discuss some general features of this framework. If $P\rightarrow M$ is a $GN$-structure corresponding to a null-quadruple $(\lambda,\Lambda,g^{\perp},f).$ If a metric $g$ belongs to $P,$ then since $\lambda$ is a null-distribution w.r.t. $g$ and $\lambda^{\perp}=\Lambda,$ it follows that the algebraic boost-order classification of tensors induced by $(M,g,\lambda)$ is the same as the algebraic classification derived from $P\rightarrow M$ given above in \eqref{KSBoostorder}.
 
 Furthermore, if $\tilde{g}$ is another metric belonging to $P,$ then by proposition \ref{KSaffine}, $\tilde{g}-g$ is a tensor of type $III$ w.r.t. $\lambda.$ Now suppose that $T$ is an even ranked type $II$ tensor. Then any full contraction of $T$ perfomed with respect to either $g$ or $\tilde{g}$ must be the same, since any contribution of a tensor of type $III$ must vanish.
 
 Hence any $GN$-structure $P\rightarrow M$  gives well-defined full contractions
 \begin{equation}
     Tr(T),
 \end{equation}
 of any even-ranked tensor $T$ of type $II.$
 
 Lastly, if $X$ is a vector field belonging to the null-distribution $\lambda,$ then using the functional $f\in(\lambda\otimes TM/\Lambda)^{*},$ the $GN$-structure allows us to define a dual form $X^{\natural}$ by 
 \begin{equation}
     X^{\natural}(Z)=f(X\otimes [Z]),\quad \forall\, Z\in \mathcal{X}(M).
 \end{equation}
 We see that $X^{\natural}$ vanishes on $\Lambda,$ showing that $X^\natural\in \lambda^{*}.$ Clearly if $g\in \mathcal{M},$ then this dual coincides with the metric dual for vector fields belonging to $\lambda.$
 \newline
 
 Next we consider connections on $GN$-structures:
 \begin{proposition}\label{KSConnections}
     Let $P\rightarrow M$ be a $GN$-structure corresponding to a null-quadruple $(\lambda,\Lambda,g^{\perp},f)$ and suppose that $g$ is a metric belonging to $P.$ If $\Gamma$ is a linear connection on $M$ with affine connection $\nabla,$ then 
     the following are equivalent:
     \begin{enumerate}[i)]
     \item $\Gamma$ is a connection in $P$.
     \item $\nabla_{Z} \lambda\subset \lambda$, $\nabla_{Z}\Lambda\subset \Lambda$ and $\nabla_{Z}g$ is of type $III,$ for all vector fields $Z.$
     \item $\nabla$ restricts to an affine connection on $\lambda$ and $\Lambda$ such that $g^\perp$ and $f$ are covariantly constant w.r.t. the connections induced on $\Lambda/\lambda$ and $\lambda \otimes TM/\Lambda$ respectively.
     \end{enumerate}
 \end{proposition}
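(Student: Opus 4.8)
The strategy is to reduce all three statements to a single algebraic condition on the connection coefficients in an adapted local frame. I would use the standard fact \cite{KN1} that $\Gamma$ is a connection in $P$ if and only if, for some (equivalently every) local section $s=(X,Y,Z_{i})\colon U\to P$, the pulled-back connection form $\theta=s^{*}\omega$ takes values in the Lie algebra $\mathfrak{gn}$ of $GN$; the passage from ``some'' to ``every'' is immediate, since two sections of $P$ differ by a $GN$-valued gauge transformation $h$, under which $\theta\mapsto h^{-1}\theta h+h^{-1}dh$, and both terms stay in $\mathfrak{gn}$ by $\mathrm{Ad}(GN)$-invariance and the Maurer--Cartan relation. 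Differentiating the matrix form of $GN$ at the identity, $\mathfrak{gn}$ consists of the matrices whose $(1,1)$ entry is an arbitrary $\alpha$, whose $(2,2)$ entry equals $-\alpha$, whose first row and whose column below the $(2,2)$ entry are arbitrary, whose lower-right $(n-2)\times(n-2)$ block is skew-symmetric, and all of whose remaining entries vanish. Writing $E_{1}=X$, $E_{2}=Y$, $E_{2+i}=Z_{i}$ and $\nabla_{Z}E_{b}=\theta^{a}{}_{b}(Z)E_{a}$, the condition $\theta\in\mathfrak{gn}$ is the conjunction of: (a) $\theta^{a}{}_{1}=0$ for $a\geq 2$; (b) $\theta^{2}{}_{b}=0$ for $b\neq 2$; (c) $\theta^{1}{}_{1}+\theta^{2}{}_{2}=0$; and (d) $\theta^{i}{}_{j}+\theta^{j}{}_{i}=0$ for $i,j\in\{3,\dots,n\}$.

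For $i)\Leftrightarrow ii)$ I would note that in this frame $\lambda=\mathrm{span}(E_{1})$ and $\Lambda=\mathrm{span}(E_{1},E_{3},\dots,E_{n})$, so that $\nabla_{Z}\lambda\subset\lambda$ is precisely (a) and $\nabla_{Z}\Lambda\subset\Lambda$ is precisely (b). Since $(X,Y,Z_{i})$ is a $g$-null frame the components $g_{ab}$ are constant, whence $(\nabla_{Z}g)_{ab}=-\big(g_{cb}\,\theta^{c}{}_{a}+g_{ac}\,\theta^{c}{}_{b}\big)$. A symmetric $2$-tensor is of type $III$ exactly when it vanishes whenever one argument lies in $\lambda$ or both arguments lie in $\Lambda$, i.e.\ when its only nonzero components are the $(2,2)$ and $(2,j)$ ones. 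Computing $(\nabla_{Z}g)_{ab}$ component by component and imposing this shows that, once (a) and (b) are assumed, $\nabla_{Z}g$ is of type $III$ if and only if (c) and (d) hold. Hence $ii)$ is equivalent to $(a)\wedge(b)\wedge(c)\wedge(d)$, i.e.\ to $\theta\in\mathfrak{gn}$, i.e.\ to $i)$.

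For $i)\Leftrightarrow iii)$ the requirement that $\nabla$ restrict to $\lambda$ and to $\Lambda$ is again exactly $(a)\wedge(b)$, and under these the induced connections on $\lambda$, on $\Lambda/\lambda$, and on $TM/\Lambda$ are well defined. I would then check that the induced connection matrix on $\Lambda/\lambda$ in the frame $[Z_{i}]$ is the lower-right block $[\theta^{i}{}_{j}]_{i,j\geq 3}$, so that $g^{\perp}$ (with constant components $\delta_{ij}$) is parallel if and only if that block is skew, namely (d); and that the induced connection on the line bundle $\lambda\otimes TM/\Lambda$ has coefficient $\theta^{1}{}_{1}+\theta^{2}{}_{2}$ on $X\otimes[Y]$, so that $f$ (with $f(X\otimes[Y])=1$) is parallel if and only if (c) holds. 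Therefore $iii)$ is equivalent to $(a)\wedge(b)\wedge(c)\wedge(d)$, that is to $\theta\in\mathfrak{gn}$, completing the chain.

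The main obstacle I anticipate is bookkeeping rather than conceptual: carefully establishing the reduction of $i)$ to a condition on a single adapted frame (the $\mathrm{Ad}$-invariance argument), fixing index and transpose conventions so that $\theta\in\mathfrak{gn}$ corresponds to (a)--(d), and verifying that translating ``type $III$'' into components and combining with (a),(b) yields exactly (c),(d) with no leftover relations --- in particular that the cross terms $\theta^{2}{}_{j}+\theta^{j}{}_{1}$ arising from the type-$III$ condition vanish automatically once (a) and (b) are in force.
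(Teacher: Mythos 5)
Your proposal is correct and follows essentially the same route as the paper: pass to an adapted local frame (a local section of $P$ that is a null frame for $g$), identify condition $i)$ with the connection form taking values in $\mathfrak{gn}$, and match the resulting component conditions (a)--(d) against $ii)$ and $iii)$. You simply carry out in full detail steps the paper compresses --- the explicit description of $\mathfrak{gn}$, the component-by-component verification including the automatically vanishing cross terms, and the equivalence with $iii)$ via the induced connections on $\Lambda/\lambda$ and $\lambda\otimes TM/\Lambda$, which the paper dismisses as clear.
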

\begin{proof}
$"i)\Leftrightarrow ii)"$
Let $\{U_{\alpha}\}_{\alpha\in I}$ be a covering with local frames $\{k^{\alpha},l^{\alpha},m_{i}^{\alpha}\}_{\alpha\in I}$ of $P,$ such that $\{k^{\alpha},l^{\alpha},m_{i}^{\alpha}\}_{\alpha\in I}$ are null-frames for $g.$ Then $\Gamma$ is a connection on $P$ if and only if each local connection one-form
$\{A_{\alpha}\}_{\alpha\in I}$ has values in the Lie algebra $\mathfrak{gn}$ of $GN.$ 

Suppose that $\Gamma$ is a connection in $P$. If $Z\in \mathcal{X}(M),$ then
\begin{equation}
\begin{gathered}
    (\nabla_{Z}g)(k^{\alpha},l^{\alpha})=-g(\nabla_{Z}k^{\alpha},l^{\alpha})-g(k^{\alpha},\nabla_{Z}l^{\alpha})\\=-g(\tensor{(A_{\alpha}(Z))}{^1_1}k^{\alpha},l^{\alpha})-g(k^{\alpha},\tensor{(A_{\alpha}(Z))}{^2_2}l^{\alpha})=0,
\end{gathered}
\end{equation}
where the last equality holds since $\tensor{(A_{\alpha}(Z))}{^1_1}=-\tensor{(A_{\alpha}(Z))}{^2_2}.$ By the same reasoning $\nabla_{Z}g(m^{\alpha}_{i},m^{\alpha}_{j})$ =0, since $\tensor{A(Z)}{^{i}_{j}}$ is skew-symmetric for $i,j\geq 3.$ It therefore follows that $\nabla_{Z}g$ is of type $III$ and $\nabla_{Z}\lambda\subset \lambda$ and $\nabla_{Z}\Lambda \subset \Lambda.$ In order to prove the converse one follows the same reasoning to show that $ii)$ implies that $\tensor{A(Z)}{^{i}_{j}}\in \mathfrak{gn}.$

$ii)$ and $iii)$ are clearly equivalent since $g$ induces $g^{\perp}$ and $f$ on $\Lambda/\lambda\otimes \Lambda/\lambda$ and $\lambda\otimes TM/\Lambda$ respectively.
\end{proof}

\section{GN-structures and Nil-Killing vector fields}

Recall from \cite{herviknew,IDIFF} that if $(M,g)$ is a Lorentzian manifold  then a vector field $X$ is said to be nil-Killing if $\tensor{(\mathcal{L}_{X}g)}{^{a}_{b}}$ is nilpotent. Given a point $p\in M,$ one can shows that the operator corresponding to $\mathcal{L}_Xg$ is nilpotent if and only if there is some null-line $\lambda_{p}\subset T_{p}M$ such that $(\mathcal{L}Xg)_{p}$ is of type $III$ w.r.t. $\lambda_{p}.$ Moreover, this null-line is unique provided that $X$ is not Killing.

Therefore if $\lambda$ is a null-distribution on $M$ we say that a vector field $X$ is nil-Killing with respect to $\lambda$ if $\mathcal{L}_{X}g$ is of type $III$ w.r.t. $\lambda.$ One can see that this is equivalent to 
\begin{equation}
    \mathcal{L}_{X}g(Y,Z)=0, \, \mathcal{L}_{X}g(W,\tilde{W})=0,
\end{equation}
for all $Y\in \lambda,$ $Z\in TM$ and $W,\tilde{W}\in \{\lambda\}^{\perp}.$

If in addition $[X,\lambda]\subset\lambda$, then $X$ preserves algebraic structure in the sense that if $\phi_{t}$ is the flow of $X,$ then $\phi_{t}$ preserves the boost-order of tensors, given by $\lambda.$ We shall refer to such vector fields as algebra preserving nil-Killing vector fields w.r.t. $\lambda$. It was seen in \cite{IDIFF} that the collection of such vector fields
\begin{equation}
    \mathfrak{g}_{(g,\lambda)} :=\{\text{Algebra perserving nil-Killing vector fields w.r.t. } \lambda\}
\end{equation}
constitute a Lie algebra. From \cite{NilKilling} we have the following characterization:
\begin{proposition}\label{flowchar}
Let $(M,g,\lambda)$ be a Lorentzian manifold with a null vector distribution $\lambda.$ A vector field $X$ is an algebra preserving nil-Killing w.r.t. $\lambda$ iff. for each $t$ the flow $\phi_{t}$ of $X$ satisfies the following:
\begin{enumerate}[i)]
    \item $(\phi_{t})_{*}(\lambda)\subset\lambda.$
    \item $\phi_{t}^{*}g-g$ is of type $III$ w.r.t. $\lambda.$
\end{enumerate}
\end{proposition}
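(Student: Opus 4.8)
The plan is to exploit the two standard identities relating a vector field to its flow, namely $\frac{d}{dt}\phi_t^{*}T=\phi_t^{*}(\mathcal{L}_XT)$ for a tensor field $T$ and $\mathcal{L}_XY=\frac{d}{dt}\big|_{t=0}(\phi_{-t})_{*}Y$ for a vector field $Y$, together with the elementary fact that a smooth curve taking values in a fixed linear subspace has its derivative and its integral in that same subspace. Throughout I write $\Lambda=\lambda^{\perp}$ for the metric $g$, and recall that a symmetric $2$-tensor $S$ is of type $III$ precisely when $S(Y,Z)=0$ for $Y\in\lambda$, $Z\in TM$ and $S(W,\tilde W)=0$ for $W,\tilde W\in\Lambda$.

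The implication from the flow conditions to the nil-Killing property is the easy one and follows by differentiating at $t=0$. If $(\phi_t)_{*}\lambda\subset\lambda$ for all $t$, then for a section $Y$ of $\lambda$ the curve $t\mapsto[(\phi_{-t})_{*}Y]_p$ lies in the line $\lambda_p$, so its derivative $(\mathcal{L}_XY)_p=[X,Y]_p$ lies in $\lambda_p$; hence $[X,\lambda]\subset\lambda$. Likewise $\mathcal{L}_Xg=\lim_{t\to0}\tfrac1t(\phi_t^{*}g-g)$ is a limit of type $III$ tensors, and since the type $III$ tensors form a linear subspace at each point, $\mathcal{L}_Xg$ is of type $III$.

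For the converse I would proceed in three steps. First, I would upgrade the bracket condition $[X,\lambda]\subset\lambda$ to flow invariance $(\phi_t)_{*}\lambda\subset\lambda$: writing $[X,Y]=\mu Y$ for a local nonvanishing section $Y$ of $\lambda$, the curve $v(t)=[(\phi_{-t})_{*}Y]_p$ satisfies the scalar-coefficient linear ODE $v'(t)=\mu(\phi_t(p))\,v(t)$, whose solution $v(t)=\exp\!\big(\int_0^{t}\mu(\phi_s(p))\,ds\big)v(0)$ stays on the line $\lambda_p$. (The same argument with a local frame and the invertibility of the fundamental matrix shows, more generally, that a subbundle preserved by $\mathcal{L}_X$ is preserved by the flow.) Second, and this is the crucial step, I would show that the flow also preserves $\Lambda=\lambda^{\perp}$, even though $\phi_t$ need not be an isometry. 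The point is to verify $[X,\Gamma(\Lambda)]\subset\Gamma(\Lambda)$: for $Y\in\Gamma(\lambda)$ and $W\in\Gamma(\Lambda)$ one has $g(Y,W)=0$, so the Leibniz rule gives $(\mathcal{L}_Xg)(Y,W)=-g([X,Y],W)-g(Y,[X,W])$; since $[X,Y]\in\Gamma(\lambda)$ forces $g([X,Y],W)=0$ while the type $III$ condition gives $(\mathcal{L}_Xg)(Y,W)=0$, we conclude $g(Y,[X,W])=0$ for all $Y\in\Gamma(\lambda)$, i.e. $[X,W]\in\Gamma(\Lambda)$. The ODE argument of the first step then yields $(\phi_t)_{*}\Lambda\subset\Lambda$.

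Finally, with $\phi_t$ preserving both $\lambda$ and $\Lambda$, the pullback $\phi_t^{*}$ maps type $III$ symmetric $2$-tensors to type $III$ tensors, directly from the characterisation above. Hence $\phi_s^{*}(\mathcal{L}_Xg)$ is of type $III$ for every $s$, and integrating the flow identity gives
\begin{equation}
\phi_t^{*}g-g=\int_0^{t}\phi_s^{*}(\mathcal{L}_Xg)\,ds,
\end{equation}
which lies in the pointwise linear subspace of type $III$ tensors, establishing condition ii). I expect the main obstacle to be the second step: because $\Lambda$ is defined through the metric $g$, which is not preserved by the non-isometric flow $\phi_t$, its invariance is not automatic and must be extracted from the nil-Killing condition as above; the remaining work is the routine ODE/fundamental-matrix lemma promoting infinitesimal invariance to invariance under the flow.
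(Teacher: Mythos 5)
Your proof is correct, but there is nothing in this paper to compare it against: Proposition \ref{flowchar} is imported from the reference cited just above it (``From \cite{NilKilling} we have the following characterization''), and no proof is given here. Judged on its own, your argument is sound and is the natural one. The forward direction is exactly the routine differentiation at $t=0$, using that the type $III$ symmetric $2$-tensors at a point form a linear (hence closed) subspace. For the converse you correctly isolate the one non-trivial point: since $\phi_t$ is not an isometry, invariance of $\Lambda=\lambda^{\perp}$ under the flow is not automatic and must be extracted from the nil-Killing condition; your computation $(\mathcal{L}_Xg)(Y,W)=-g([X,Y],W)-g(Y,[X,W])$ with $Y\in\Gamma(\lambda)$, $W\in\Gamma(\Lambda)$ does exactly that, and it uses both hypotheses ($[X,\lambda]\subset\lambda$ kills the first term, type $III$ kills the left side), as it must. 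The remaining ingredients --- promoting $[X,\Gamma(D)]\subset\Gamma(D)$ to $(\phi_t)_*D\subset D$ via the fundamental-matrix lemma, the fact that $\phi_t^{*}$ preserves type $III$ once both $\lambda$ and $\Lambda$ are flow-invariant, and the identity $\phi_t^{*}g-g=\int_0^{t}\phi_s^{*}(\mathcal{L}_Xg)\,ds$ --- are standard and correctly deployed; your pointwise characterisation of type $III$ symmetric tensors agrees with the one the paper states for nil-Killing fields and with its boost-order definition \eqref{KSBoostorder}. The only steps left implicit (the patching of local frames along a compact piece of a flow line in the ODE lemma, and existence of local nonvanishing sections of $\lambda$) are genuinely routine, so I see no gap.
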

Hence we see that such vector fields have the same characteristics as the elements in the structure group $GN$. Indeed we have the following result:
\begin{proposition}\label{nil}
Suppose that $P\rightarrow M$ is a $GN$-structure, and let $\lambda$ be its null-distribution. Given a metric $g$ belonging to $M$, then a vector field $X$ is an infinitesimal automorphism of $P$ iff. $X$ is an algebra preserving nil-Killing vector field w.r.t $(M,g,\lambda).$
\end{proposition}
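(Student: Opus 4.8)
The plan is to prove both implications by routing everything through the flow characterisation of Proposition \ref{flowchar}, which reduces being algebra-preserving nil-Killing to the two geometric requirements that $(\phi_t)_*(\lambda)\subset\lambda$ and that $\phi_t^*g-g$ be of type $III$ for every $t$. The bridge to the automorphism conditions is the observation already used in the proof of Proposition \ref{KSaffine}: a symmetric two-tensor $T$ lies in $\lambda^{*}\otimes_{s}\Lambda^{*}$, i.e.\ is of type $III$, exactly when $T(W,W')=0$ for all $W,W'\in\Lambda$ and $T(Y,Z)=0$ for all $Y\in\lambda$, $Z\in TM$. Combining this with the fact (from Characterisation \ref{metricsbelonging}) that a metric $g$ belonging to $P$ recovers the null-quadruple data via $g(W,W')=g^\perp([W],[W'])$ on $\Lambda$ and $g(Y,Z)=f(Y\otimes[Z])$ for $Y\in\lambda$, I can translate freely between statements about $\phi_t^*g-g$ and statements about $\phi_t^*g^\perp-g^\perp$ and $\phi_t^*f-f$.

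For the forward implication I would assume $X$ is an infinitesimal automorphism, so each $\phi_t\in Aut(P)$ and hence $(\phi_t)_*(\lambda)=\lambda$, $(\phi_t)_*(\Lambda)=\Lambda$, $\phi_t^*g^\perp=g^\perp$ and $\phi_t^*f=f$. I then evaluate $\phi_t^*g-g$ on the two relevant kinds of arguments. For $W,W'\in\Lambda$ I use $(\phi_t)_*(\Lambda)=\Lambda$ to write $(\phi_t^*g)(W,W')=g^\perp([(\phi_t)_*W],[(\phi_t)_*W'])=(\phi_t^*g^\perp)([W],[W'])=g^\perp([W],[W'])=g(W,W')$, and for $Y\in\lambda$, $Z\in TM$ I similarly use $(\phi_t)_*(\lambda)=\lambda$ together with $\phi_t^*f=f$ to obtain $(\phi_t^*g)(Y,Z)=g(Y,Z)$. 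Thus $\phi_t^*g-g$ vanishes on $\Lambda\times\Lambda$ and on $\lambda\times TM$, so it is of type $III$; with $(\phi_t)_*(\lambda)\subset\lambda$ already in hand, Proposition \ref{flowchar} delivers the conclusion.

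For the converse I start from the flow conditions of Proposition \ref{flowchar} and must recover the three automorphism conditions. Since $\phi_t$ is a diffeomorphism, the inclusion of the one-dimensional $\lambda$ into itself is forced to be equality. The hard part will be showing $(\phi_t)_*(\Lambda)=\Lambda$: here I would use $\Lambda=\lambda^{\perp}$ and the type $III$ vanishing of $\phi_t^*g-g$ to check, for $W\in\Lambda$ and $Y\in\lambda$, that $g((\phi_t)_*W,(\phi_t)_*Y)=(\phi_t^*g)(W,Y)=g(W,Y)=0$; since every element of $\lambda$ at the image point is of the form $(\phi_t)_*Y$, this places $(\phi_t)_*W$ in $\lambda^{\perp}=\Lambda$, and a dimension count upgrades the inclusion to equality. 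Once both distributions are preserved, the pullbacks $\phi_t^*g^\perp$ and $\phi_t^*f$ are defined, and reversing the computations of the forward direction — again using that $\phi_t^*g-g$ is type $III$ and that $g$ recovers $g^\perp$ and $f$ — yields $\phi_t^*g^\perp=g^\perp$ and $\phi_t^*f=f$, completing the automorphism conditions for every $t$. The only genuine subtlety throughout is bookkeeping the well-definedness of the induced maps on $\Lambda/\lambda$ and $\lambda\otimes TM/\Lambda$, which is precisely what the preservation of $\lambda$ and $\Lambda$ guarantees.
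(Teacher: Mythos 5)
Your proposal is correct and follows essentially the same route as the paper's own proof: reduce via Proposition \ref{flowchar} to the flow conditions on $g$, translate these (using that $g$ belongs to $P$) into preservation of $\lambda$, $\Lambda$, $g^{\perp}$ and $f$, and conclude by the null-quadruple characterisation of automorphisms. The only difference is one of presentation: the paper compresses everything into a chain of equivalences, while you split the two directions and explicitly justify the step $(\phi_t)_*(\Lambda)\subset\Lambda$ from $\Lambda=\lambda^{\perp}$ and the type $III$ condition, a detail the paper leaves implicit.
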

\begin{proof}
Let $(\lambda,\Lambda,g^{\perp},f)$ be the null-quadruple corresponding to $P\rightarrow M$ and $\phi_{t}$ the flow of $X$.

By proposition \ref{flowchar}, $X$ is an algebra preserving nil-Killing vector field w.r.t. $(M,g,\lambda)$ iff. \begin{equation}(\phi_{t})_{*}(\lambda)\subset \lambda,  \end{equation}\begin{equation}g((\phi_{t})_{*}(\tilde{Z}),(\phi_{t})_{*}(Y))=g(\tilde{X},Y),\quad g((\phi_{t})_{*}(W_1),(\phi_{t})_{*}(W_2))=g(W_{1},W_{2}),\end{equation} for all $t,$ $Z\in \lambda$, $Y\in TM$  and $W_1,W_2\in \lambda^{\perp}.$ 

Since $g$ belongs to $P$ this holds iff. \begin{equation}(\phi_{t})_{*}(\lambda)\subset \lambda,\quad (\phi_{t})_{*}(\Lambda)\subset \Lambda,\end{equation} \begin{equation} f((\phi_{t})_{*}Z\otimes[(\phi_{t})_{*}Y])=f(Z,Y),\quad g^\perp((\phi_{t})_{*}(W_1),(\phi_{t})_{*}(W_2))=g^\perp(W_{1},W_{2}),\end{equation} for all $t,$ $Z\in \lambda$, $Y\in TM$  and $W_1,W_2\in \lambda^{\perp}.$

By the characterisation of automorphisms and  metrics belonging to $P$ in terms of null-quadruples given in section \ref{KSGNStruct}, this is true iff. $X$ is an infinitesimal automorphism of $P.$
\end{proof}

If $(M,g,\lambda)$ is a Lorentzian manifold with a null-distribution, then we obtain an associated null-quadruple given by $(\lambda,\lambda^{\perp},g^{\perp},f),$ where $g^{\perp}$ and $f$ are induced by $g$ by taking in a natural manner. We denote the corresponding $GN$-structure by \begin{equation}P(M,g,\lambda).\end{equation} By construction it is clear that the metric $g$ belongs to $P$.

\begin{corollary}\label{autalg}
Let $(M,g,\lambda)$ be a Lorentzian manifold with a null-distribution, and let $P(M,g,\lambda)$ be the associated $GN$-structure. Then
\begin{equation}
\mathfrak{g}_{(g,\lambda)}=aut[P(M,g,\lambda)],
\end{equation}
where $aut[P(M,g,\lambda)]$ is the Lie-algebra of infinitesimal the $GN$-structure.
\end{corollary}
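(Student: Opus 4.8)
The plan is to recognize that Corollary \ref{autalg} is essentially an immediate consequence of Proposition \ref{nil} once the correct identifications are unpacked, so the proof will consist of assembling already-established facts rather than new computation. The key observation is that $P(M,g,\lambda)$ is defined precisely as the $GN$-structure whose associated null-quadruple is $(\lambda,\lambda^{\perp},g^{\perp},f)$, with $g^{\perp}$ and $f$ induced by $g$; and the text immediately after its definition notes that $g$ belongs to this $GN$-structure. Thus we are in exactly the situation governed by Proposition \ref{nil}: we have a $GN$-structure with null-distribution $\lambda$ together with a metric $g$ belonging to it.

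First I would invoke Proposition \ref{nil} directly. It states that for a $GN$-structure $P \rightarrow M$ with null-distribution $\lambda$ and a metric $g$ belonging to it, a vector field $X$ is an infinitesimal automorphism of $P$ if and only if $X$ is an algebra preserving nil-Killing vector field with respect to $(M,g,\lambda)$. Applying this with $P = P(M,g,\lambda)$ shows that the underlying sets coincide: a vector field $X$ lies in $aut[P(M,g,\lambda)]$ if and only if $X \in \mathfrak{g}_{(g,\lambda)}$. This is the content of the set-theoretic equality claimed in the corollary.

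Second, since the statement asserts an equality of Lie algebras and not merely of sets, I would remark that both sides carry the Lie bracket of vector fields on $M$. The collection $\mathfrak{g}_{(g,\lambda)}$ is a Lie algebra under this bracket by the result of \cite{IDIFF} cited in the excerpt, and $aut[P(M,g,\lambda)]$ is a Lie algebra of infinitesimal automorphisms under the same bracket by the general theory of $G$-structures. Because the bracket on each is the restriction of the commutator of vector fields, the bijection furnished by Proposition \ref{nil} is automatically a Lie-algebra isomorphism, indeed an equality of Lie subalgebras of $\mathcal{X}(M)$.

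I do not anticipate a genuine obstacle here, as the corollary is a packaging of Proposition \ref{nil} specialized to the canonical $GN$-structure $P(M,g,\lambda)$; the only point requiring care is to confirm that the null-quadruple data $(g^{\perp},f)$ induced by $g$ in the definition of $P(M,g,\lambda)$ are exactly the data relative to which Proposition \ref{nil} characterizes automorphisms, so that the hypothesis ``$g$ belongs to $P$'' is genuinely met. Since this is recorded explicitly in the excerpt, the proof reduces to one line citing Proposition \ref{nil}.
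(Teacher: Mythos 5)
Your proposal is correct and matches the paper's intent exactly: the paper states this corollary without proof precisely because it is the specialization of Proposition \ref{nil} to $P(M,g,\lambda)$, using the recorded fact that $g$ belongs to this $GN$-structure by construction. Your additional remark that both sides are Lie subalgebras of $\mathcal{X}(M)$ under the same bracket, so the set equality is an equality of Lie algebras, is a sensible (if brief) point of care that the paper leaves implicit.
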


\medskip

\section{Kundt structures}\label{KSKundtStructuresSection}

In this section we shall see use proposition \ref{nil} to give conditions that ensure that the metrics belonging to a $GN$-structure are Kundt spacetimes with respect to the null-distribution associated to the $GN$-structure.

First let us recall the definition of a Kundt spacetime \cite{KundtSpacetimes,GenKundt,ExactSolutions}. Let $(M,g,\lambda)$ be a Lorentzian manifold with a null-distribution $\lambda.$ Such a triple is said to be Kundt if $\lambda^{\perp}$ is integrable and for each point $p\in M$ there exists a vector field $X$ belonging to $\lambda$ such that $X$ is affinely geodesic, shear-free and divergence-free, i.e.,
\begin{equation}\label{KSGeoShearDiv}
\nabla_{X}X=0,\quad \nabla_{(a}X_{b)}\nabla^{a}X^{b}\quad \text{ and } \quad \nabla_{a}X^{a}=0,   \end{equation}
respectively. In this case we say that $X$ is a Kundt vector field of $(M,g,\lambda)$.

Without assuming integrability, it was seen in \cite{NilKilling} that a vector field $X$ belonging to a null-distribution $\lambda$ satisfies \eqref{KSGeoShearDiv} if and only if
$X$ is nil-Killing w.r.t $\lambda.$ Since $X$ belongs to $\lambda$ such a vector field automatically preserves the algebraic structure given by $\lambda.$

The following definition give an analogue of this conditions for $GN$-structures.

\begin{definition}
A $GN$-structure $P\rightarrow M$ with associated null-distributions $\lambda$ and $\Lambda$ is said to be a Kundt structure if $\Lambda$ is integrable and about each point $p\in M$ there exists a local vector field $X$ belonging to the distribution $\lambda$ which is an infinitesimal automorphism of $P.$ 
\end{definition}
Now the following results on metrics belonging to Kundt structures are corollaries of proposition \ref{nil}.

\begin{corollary}\label{KSKundtCorollary}
If $P\rightarrow M$ is a Kundt structure with associated null-distribution $\lambda,$ then for each metric $g$ belonging to $P,$ the triple $(M,g,\lambda)$ is a Kundt spacetime.
\end{corollary}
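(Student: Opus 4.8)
The plan is to verify the two defining conditions of a Kundt spacetime directly from the two defining conditions of a Kundt structure, using Proposition \ref{nil} as the bridge between the $GN$-structure language and the metric language. First I would fix a metric $g$ belonging to $P$. By Characterisation \ref{metricsbelonging}, $\lambda$ is a null distribution with respect to $g$ and $\lambda^{\perp}=\Lambda$. Since $P$ is a Kundt structure, $\Lambda$ is integrable by hypothesis; hence $\lambda^{\perp}=\Lambda$ is integrable, which immediately establishes the first requirement in the definition of a Kundt spacetime.

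Next I would produce a Kundt vector field about each point. Given $p\in M$, the Kundt-structure hypothesis supplies a local vector field $X$ belonging to $\lambda$ which is an infinitesimal automorphism of $P$. Because $g$ belongs to $P$, Proposition \ref{nil} applies and tells us that $X$ is an algebra-preserving nil-Killing vector field with respect to $(M,g,\lambda)$; in particular $X$ is nil-Killing w.r.t. $\lambda$ while still belonging to $\lambda$.

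To finish I would invoke the characterization recalled from \cite{NilKilling} at the start of this section: a vector field lying in $\lambda$ is nil-Killing w.r.t. $\lambda$ if and only if it is affinely geodesic, shear-free, and divergence-free, i.e. it satisfies \eqref{KSGeoShearDiv}. Thus $X$ is a Kundt vector field for $(M,g,\lambda)$. Combining the existence of such an $X$ about each point with the integrability of $\lambda^{\perp}$ established above shows that $(M,g,\lambda)$ is a Kundt spacetime, as required.

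The argument is essentially a matter of assembling the cited equivalences in the correct order, so there is no single hard computational step. The one point requiring care is the logical chain in the second paragraph: one must ensure that membership of $X$ in $\lambda$ — not merely the nil-Killing property — survives the passage through Proposition \ref{nil}, since the \cite{NilKilling} equivalence between \eqref{KSGeoShearDiv} and the nil-Killing condition holds precisely for vector fields belonging to $\lambda$. This is immediate in the present setting because the infinitesimal automorphism furnished by the Kundt structure is required to belong to $\lambda$ from the outset, so no extra verification is needed.
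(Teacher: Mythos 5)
Your proposal is correct and follows essentially the same route as the paper's own proof: obtain the local infinitesimal automorphism $X\in\lambda$ from the Kundt-structure hypothesis, pass through Proposition \ref{nil} to conclude $X$ is nil-Killing, invoke the \cite{NilKilling} equivalence with \eqref{KSGeoShearDiv}, and combine with the integrability of $\Lambda=\lambda^{\perp}$. You merely spell out two steps the paper leaves implicit (the appeal to Characterisation \ref{metricsbelonging} and the explicit citation of the nil-Killing/Kundt-vector-field equivalence), which is fine.
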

\begin{proof}
If $p\in M$, then since $P\rightarrow M$ is a Kundt structure we can find a vector field $X$ in a neighborhood about $p$ belonging to $\lambda$ such that $X$ is an infinitesimal automorphism of $P.$ It follows from proposition \ref{nil} that $X$ is nil-Killing w.r.t. $(M,g,\lambda).$ By assumption $\lambda^{\perp}=\Lambda$ is integrable, and therefore $(M,g,\lambda)$ is a Kundt spacetime.
\end{proof}

\begin{corollary}
If $(M,g,\lambda)$ is a Kundt spacetime, then the induced $GN$-structure $P(M,g,\lambda)$ is a Kundt structure.
\end{corollary}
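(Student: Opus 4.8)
The plan is to verify directly the two defining conditions of a Kundt structure for $P(M,g,\lambda)$: integrability of the codimension-one distribution $\Lambda$, and the local existence, about each point, of an infinitesimal automorphism belonging to $\lambda$. The organizing observation is that $P(M,g,\lambda)$ corresponds by construction to the null-quadruple $(\lambda,\lambda^{\perp},g^{\perp},f)$, so its associated distributions are exactly $\lambda$ and $\Lambda=\lambda^{\perp}$, and moreover the metric $g$ belongs to $P(M,g,\lambda)$. This places us precisely in the hypotheses of Proposition \ref{nil}, where infinitesimal automorphisms of $P$ are identified with algebra preserving nil-Killing vector fields w.r.t. $(M,g,\lambda)$.

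First I would dispose of the integrability requirement. Since $(M,g,\lambda)$ is Kundt, $\lambda^{\perp}$ is integrable by definition; because $\Lambda=\lambda^{\perp}$ for this particular $GN$-structure, the first condition for a Kundt structure is immediate and needs no further work.

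Next I would produce the required infinitesimal automorphism at an arbitrary $p\in M$. By the Kundt hypothesis there is a local vector field $X$ belonging to $\lambda$ near $p$ satisfying the geodesic, shear-free and divergence-free equations \eqref{KSGeoShearDiv}. Invoking the characterization from \cite{NilKilling} recalled just above the definition of Kundt structures, these equations are equivalent to $X$ being nil-Killing with respect to $\lambda$; and since $X$ lies in $\lambda$ it automatically preserves the algebraic structure, so $X\in\mathfrak{g}_{(g,\lambda)}$ is algebra preserving nil-Killing. As $g$ belongs to $P(M,g,\lambda)$, Proposition \ref{nil} then yields that $X$ is an infinitesimal automorphism of $P(M,g,\lambda)$. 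This supplies the required local infinitesimal automorphism in $\lambda$ around each point, completing the verification of both conditions.

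I expect no serious obstacle, as this corollary is essentially the reverse implication of Corollary \ref{KSKundtCorollary}: all of the analytic content is already packaged into Proposition \ref{nil} together with the nil-Killing characterization of the Kundt equations. The only point warranting care is confirming that the Kundt vector field is algebra preserving, and not merely nil-Killing; but this is automatic, since $X$ spans $\lambda$ locally, so its flow preserves $\lambda$ and hence the entire boost-order filtration, placing it in $\mathfrak{g}_{(g,\lambda)}$ as needed.
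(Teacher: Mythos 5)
Your proof is correct and follows essentially the same route as the paper: the paper's own (one-sentence) proof likewise rests on the identification of local infinitesimal automorphisms of $P(M,g,\lambda)$ with the nil-Killing vector fields of $(M,g,\lambda)$, i.e.\ Proposition \ref{nil} combined with the nil-Killing characterization of the Kundt equations from \cite{NilKilling}. You merely spell out the details the paper leaves implicit, including the (immediate) integrability of $\Lambda=\lambda^{\perp}$, which is a welcome clarification but not a different argument.
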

\begin{proof}
This follows from the fact the the collection of local nil-Killing vector fields of $(M,g,\lambda)$ is equal to the collection of local infinitesimal automorphisms of $P.$
\end{proof}

The following proposition gives a characterization of $Kundt$ structures in terms of connections. \begin{proposition}
If $P\rightarrow M$ is a $GN$-structure, then the following are equivalent:
\begin{enumerate}[i)]
    \item $P$ is a Kundt structure.
    \item For each point $p\in M$ there is a neighborhood $U$ such that the restriction of $P$ to $U$ has a torsion-free connection.
    \end{enumerate}
\end{proposition}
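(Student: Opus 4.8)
The plan is to use Proposition \ref{KSConnections} to translate statement ii). Fix a metric $g$ belonging to $P$, which exists by Proposition \ref{KSaffine}; then a linear connection lies in $P$ exactly when its affine connection $\nabla$ is torsion-free, preserves $\lambda$ and $\Lambda$, and has $\nabla_{Z}g$ of type $III$ for all $Z$. I would prove the two implications separately, writing $k$ for a local nonvanishing section of $\lambda$ and $\{k,l,m_{i}\}$ for a local null-frame of $g$ belonging to $P$, so that $\Lambda=\lambda^{\perp}=\langle k,m_{i}\rangle$ and $g(\lambda,\Lambda)=0$.

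For ii)$\Rightarrow$i), integrability of $\Lambda$ is immediate: for $V,W\in\Lambda$ torsion-freeness gives $[V,W]=\nabla_{V}W-\nabla_{W}V\in\Lambda$, since $\nabla$ preserves $\Lambda$. For the existence of an infinitesimal automorphism belonging to $\lambda$, I would use the identity
\begin{equation}
(\mathcal{L}_{X}g)(V,W)=(\nabla_{X}g)(V,W)+g(\nabla_{V}X,W)+g(\nabla_{W}X,V),
\end{equation}
valid for any torsion-free $\nabla$, together with $\nabla_{Z}X\in\lambda$, $g(\lambda,\Lambda)=0$ and $\nabla g$ of type $III$. A short check on the slot types shows $\mathcal{L}_{X}g$ is automatically of type $III$ on $\Lambda\times\Lambda$ and on $\lambda\times TM$, the only surviving contribution being $g(\nabla_{X}X,Y)$; as $\nabla_{X}X=\rho X\in\lambda$ this equals $\rho\,g(X,Y)$. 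Hence a section $X$ of $\lambda$ is nil-Killing precisely when $\nabla_{X}X=0$. Such a section is produced by rescaling: with $X=e^{\phi}k$ and $\nabla_{k}k=\rho_{0}k$, the geodesic equation reduces to the linear first-order PDE $k(\phi)=-\rho_{0}$, solvable locally by integrating along the integral curves of $k$. By Proposition \ref{nil} this $X$ is an infinitesimal automorphism, so $P$ is a Kundt structure.

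For i)$\Rightarrow$ii) I would construct a torsion-free connection in $P$ by correcting the Levi-Civita connection $\nabla^{g}$. Taking the nil-Killing geodesic $k=X$ supplied by the Kundt structure, the decisive observation is that for $V,W\in\Lambda$,
\begin{equation}
g(\nabla^{g}_{V}k,W)=\tfrac12(\mathcal{L}_{k}g)(V,W)-\tfrac12\,g(k,[V,W]),
\end{equation}
whose symmetric part vanishes because $k$ is nil-Killing and whose antisymmetric part vanishes because $\Lambda$ is integrable, so that $[V,W]\in\Lambda=k^{\perp}$. Thus $\nabla^{g}_{U}k\in\lambda$ for every $U\in\Lambda$, and $\nabla^{g}$ already preserves both $\lambda$ and $\Lambda$ in all $\Lambda$-directions; the only obstruction to its being a $GN$-connection is the transverse defect $\tau_{i}:=g(\nabla^{g}_{l}k,m_{i})$. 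I would set $\nabla=\nabla^{g}+S$, with $S$ the symmetric difference tensor Koszul-determined by the type $III$ non-metricity $N=\nabla g$ fixed by $N_{k}(l,m_{i})=2\tau_{i}$ and all remaining free components zero. A computation using the type $III$ constraints then shows this single choice forces $\nabla_{U}k\in\lambda$ and $\nabla_{U}W\in\Lambda$ for all $U$ and all $W\in\Lambda$ simultaneously, while symmetry of $S$ keeps $\nabla$ torsion-free; by Proposition \ref{KSConnections} this is the desired connection.

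The main obstacle is the i)$\Rightarrow$ii) construction, and within it the verification that one type $III$ correction repairs the whole structure. The conceptual content, which I would emphasise, is that the two hypotheses of a Kundt structure enter in complementary ways: the nil-Killing condition removes the symmetric (shear and expansion) part of $\nabla^{g}k$ along $\Lambda$, while integrability of $\Lambda$ removes the antisymmetric (twist) part, so the obstruction collapses to the single transverse component that the type $III$ freedom of a $GN$-connection is exactly able to absorb. I would be careful to test the Koszul-determined $S$ against every slot type, since it is there that $\lambda$-preservation, $\Lambda$-preservation and vanishing of torsion must all hold at once.
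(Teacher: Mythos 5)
Your proposal is correct, but it takes a genuinely different route from the paper, most visibly in the direction i)$\Rightarrow$ii). For ii)$\Rightarrow$i) your integrability argument is identical to the paper's; for the infinitesimal automorphism, the paper normalizes a section $X$ of $\lambda$ by demanding that $X^{\natural}$ be closed and uses torsion-freeness via $Alt(\nabla X^{\natural})=dX^{\natural}=0$ to conclude that $\nabla X^{\natural}$ is of type $III$, whereas you first prove the clean criterion that a section of $\lambda$ is nil-Killing iff $\nabla_{X}X=0$ and then achieve this by solving the transport equation $k(\phi)=-\rho_{0}$ along integral curves of $k$; these are two different normalizations exploiting the same identity expressing $\mathcal{L}_{X}g$ through $\nabla g$ and $\nabla X$ for a torsion-free connection, and both are complete. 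In i)$\Rightarrow$ii) the routes truly diverge: the paper constructs adapted coordinates $(u,v,x^{k})$ in which the automorphism is $\partial/\partial v$, writes down the explicit metric $2dudv+\tilde{g}_{ij}(u,x^{k})dx^{i}dx^{j}$ belonging to $P$ (the $v$-independence of $\tilde{g}_{ij}$ coming from the automorphism), and takes its Levi-Civita connection; you instead keep an arbitrary $g\in\mathcal{M}$, observe that the nil-Killing condition kills the symmetric part and integrability the antisymmetric part of $g(\nabla^{g}_{V}k,W)$ on $\Lambda\times\Lambda$, and then absorb the single remaining transverse defect $\tau_{i}=g(\nabla^{g}_{l}k,m_{i})$ by a type $III$ non-metricity through the generalized Koszul formula. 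I checked the step you left as ``a computation'': with $N_{k}(l,m_{i})=2\tau_{i}$ one gets $S(k,l)=-\tau^{i}m_{i}$, $S(k,m_{i})=-\tau_{i}k$, $S(l,m_{i})=\tau_{i}l$ and all other components zero, and the corrected connection then preserves $\lambda$ and $\Lambda$ in every slot (your factor $2$ is exactly right), so the construction closes. What each approach buys: the paper's is shorter because the adapted coordinate form is available and is reused later for degenerate Kundt metrics; yours is coordinate-free, requires no specially constructed metric, and makes transparent how the two Kundt hypotheses enter complementarily, at the cost of the slot-by-slot verification. Note that both proofs share the implicit assumption that the local automorphism belonging to $\lambda$ is non-vanishing, so that it spans $\lambda$ on its domain.
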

\begin{proof}
$"ii)\Rightarrow i)"$ 
First we show that $\Lambda$ is integrable. Let $U$ be an open set with a torsion-free connection $\nabla$ belonging to the restriction of  $P$ to $U.$ If $W_{1},W_{2}\in \mathcal{X}(U)$ belong to $\Lambda,$ then  $\nabla_{W_i}W_{j}\in \Lambda,$ for $i,j=1,2,$ by proposition \ref{KSConnections}. Therefore since $\nabla$ is torsion-free this implies that
\begin{equation}[W_{1},W_{2}]=\nabla_{W_{1}}W_{2}-\nabla_{W_{2}}W_{1}\in \Lambda.\end{equation}
Since we can cover $M$ by open sets having torsion-free connections belonging to $P,$ this shows that $\Lambda$ is an integrable distribution.

Now let us show that $P$ has the desired local infinitesimal automorphisms belonging to $\lambda.$ Suppose $p\in M$ and choose a neighborhood $U$ with a torsion-free connection $\nabla$ belonging to $P$ on $U.$ Since $\Lambda$ is integrable we can, by shrinking $U$ if necessary,  choose $X\in \lambda$ on $U$ such that $X^{\natural}$ is a closed one-form. By proposition \ref{KSConnections}, we see that $\nabla X^{\natural}=\omega\otimes X^{\natural}$, for some one-form $\omega$.

Since $\nabla$ is Torsion-free it follows that \begin{equation}Alt(\nabla X^{\natural})=dX^{\natural}=0,\end{equation} 
and therefore $\omega$ is proportional to $X^{\natural}$, from which we see that $\nabla X^{\natural}$ is of type $III.$

Now let $g$ be any metric belonging to $P.$ It follows again by proposition \ref{KSConnections} that $\nabla_{Z} g$ is a tensor of type $III,$ for all $Z\in \mathcal{X}(M)$. Let us show that if $Z\in \mathcal{X}(M),$ then $\nabla_{Z}(X^\natural)=(\nabla_{Z}X)^{\natural}$. Suppose that $Z^\prime\in \mathcal{X}(M),$ then
\begin{equation}
    \begin{gathered}
    (\nabla_{Z}X^{\natural})(Z^{\prime})=Z(X^{\natural}(Z^{\prime}))-X^{\natural}(\nabla_{Z}Z^{\prime})\\=Z(g(X,Z^{\prime}))-g(X,\nabla_{Z}Z^{\prime}) =(\nabla_{Z}g)(X,Z^{\prime}) + g(\nabla_{Z}X,Z^{\prime})\\
    (\nabla_{Z}X)^\natural(Z^{\prime}),
    \end{gathered}
\end{equation}
where the last equality holds since $\nabla_{Z}g$ is of type III.

If $Z,Z^{\prime}\in \mathcal{X}(M),$ then by the torsion-free property
\begin{equation}
\begin{gathered}
    \mathcal{L}_{X}g(Z,Z^{\prime})\\=Xg(Z,Z^{\prime})-g([X,Z],Z^{\prime})-g(Z,[X,Z^{\prime}])\\
    =Xg(Z,Z^{\prime}) - g(\nabla_{X}Z-\nabla_{Z}X,Z^{\prime})-g(Z,\nabla_{X}Z^{\prime}-\nabla_{Z^\prime}X)\\
    =(\nabla_Xg)(Z,Z^{\prime}) +(\nabla_{Z}X)^{\natural}(Z^{\prime})+ (\nabla_{Z^{\prime}}X)^{\natural}(Z)\\
    =(\nabla_Xg)(Z,Z^{\prime}) +(\nabla_{Z}X^{\natural})(Z^{\prime})+ (\nabla_{Z^{\prime}}X^{\natural})(Z),
\end{gathered}
\end{equation}
and therefore $\mathcal{L}_Xg=\nabla_{X}g+S(\nabla X^{\natural}),$ where $S$ denotes symmetrization. It follows that $\mathcal{L}_Xg$ is of type $III$ w.r.t the null-distribution $\lambda$ associated to $P,$ and therefore $X$ is nil-Killing w.r.t. $(M,g,\lambda)$. Thus by proposition \ref{nil}, $X$ is a local infinitesimal automorphism of $P,$ showing that $P$ is Kundt.

$"i)\Rightarrow ii)"$
Let $(\lambda,\Lambda,g^{\perp},f)$ be the null-quadruple associated to $P.$ By integrability we can find a neighborhood $U$ about $p$ with coordinates $(u,v,x^{k})$ such that $\frac{\partial}{\partial v}$ is an infinitesimal automorphism of $P$, $\frac{\partial}{\partial v}\in \lambda$, $\frac{\partial}{\partial x^{i}}\in \Lambda$ and $du=(\frac{\partial}{\partial v})^{\natural}$.  

Define a metric $g$ on $U$ by
\begin{equation}
    g=2dudv + \tilde{g}_{ij}(u,v,x^{k})dx^{i}dx^{j},
\end{equation}
in such a way that $\tilde{g}_{ij}$ is compatible with $g^{\perp}$. Then the metric $g$ belongs to $P$ on $U$ and since $\frac{\partial}{\partial v}$ is an infinitesimal automorphism of $P$ it follows that $\tilde{g}_{ij}$ is independent of $v.$ Therefore its Levi-civita connection is a Torsion-free connection belonging to $P$ over $U.$
\end{proof}
Regarding the curvature of a torsion-free connection belonging to a $GN$-structure, we have the following result:
\begin{proposition}
Let $P\rightarrow M$ be a $GN$-structure. If $\nabla$ is a torsion-free connection which belongs to $P,$ then its curvature tensor $R^{\nabla}$ is of type $II.$
\end{proposition}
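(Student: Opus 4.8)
The plan is to work in a local null-frame $\{k,l,m_{i}\}$ coming from a section of $P$ and to show that every component of $R^{\nabla}$ of positive boost-order vanishes; since any two such frames differ by an element of $GN$, which preserves the boost-order classification, it suffices to verify this in one adapted frame. The starting point is that a connection in $P$ has $\mathfrak{gn}$-valued curvature, so that for all vector fields $X,Y$ the endomorphism $R^{\nabla}(X,Y)$ lies in $\mathfrak{gn}$. Unwinding the form of $\mathfrak{gn}$ (equivalently, using that $\lambda$ and $\Lambda$ are $\nabla$-parallel by Proposition \ref{KSConnections}, together with covariant constancy of $g^{\perp}$ and $f$), this gives, for all $c,d$: $R^{a}{}_{kcd}=0$ for $a\neq k$ (preservation of $\lambda$), $R^{l}{}_{m_{j}cd}=0$ (preservation of $\Lambda$), $R^{m_{i}}{}_{m_{j}cd}=-R^{m_{j}}{}_{m_{i}cd}$ (orthogonality of the $m$-block), and $R^{k}{}_{kcd}=-R^{l}{}_{lcd}$ (the reciprocal trace relation).

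Next I would carry out the boost-order bookkeeping. Assigning grades $-1,0,+1$ to $k,m_{i},l$ as vectors (and the opposite to their duals), one checks that an endomorphism in $\mathfrak{gn}$ has boost-order at most $0$, whereas the two-form factor $e^{c}\wedge e^{d}$ of $R^{\nabla}$ has boost-order at most $+1$, since the order $+2$ piece would require $k^{*}\wedge k^{*}=0$. Hence $R^{\nabla}$ has boost-order at most $+1$ to begin with. Discarding everything else by preservation of $\lambda$ and $\Lambda$, the components of boost-order exactly $+1$ fall into the three families $R^{k}{}_{kkm}$, $R^{l}{}_{lkm}$ and $R^{m_{i}}{}_{m_{j}km_{l}}$, up to the antisymmetry in the last two slots.

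It then remains to kill these three families using the first Bianchi identity $R^{a}{}_{[bcd]}=0$, available because $\nabla$ is torsion-free. For $R^{l}{}_{lkm}$ the cyclic sum is $R^{l}{}_{lkm}+R^{l}{}_{kml}+R^{l}{}_{mlk}$; the second term vanishes by preservation of $\lambda$ and the third by preservation of $\Lambda$, so $R^{l}{}_{lkm}=0$, and the reciprocal trace relation then forces $R^{k}{}_{kkm}=-R^{l}{}_{lkm}=0$. Writing $T_{ijl}:=R^{m_{i}}{}_{m_{j}km_{l}}$, the Bianchi identity—whose middle term is killed by preservation of $\lambda$ and whose last term equals $-T_{ilj}$ by the antisymmetry in the last two slots—gives $T_{ijl}=T_{ilj}$, while orthogonality of the $m$-block gives $T_{ijl}=-T_{jil}$; a tensor symmetric in one adjacent pair and antisymmetric in another is identically zero, so $T_{ijl}=0$. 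With all positive boost-order components gone, $R^{\nabla}$ is of type $II$.

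The step I expect to be the main obstacle is precisely this last one. Because $\nabla$ need not be metric, the $(0,4)$ curvature tensor has none of the usual first-pair antisymmetry or pair-interchange symmetry, so one cannot simply import the standard Riemann symmetries; the whole argument must be run on the $(1,3)$ tensor through the intrinsic $\mathfrak{gn}$-conditions. The genuinely non-formal point is the $m$-block component $R^{m_{i}}{}_{m_{j}km_{l}}$, whose vanishing is not implied by preservation of $\lambda$ and $\Lambda$ alone, but only by combining the orthogonal-block skewness with the first Bianchi identity.
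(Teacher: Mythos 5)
Your proof is correct, and it takes a genuinely different route from the paper's. The paper fixes a metric $g$ belonging to $P$ (available by Proposition \ref{KSaffine}), works with the $(0,4)$-tensor $R(X,Y,Z,W)=g(R^{\nabla}(X,Y)Z,W)$, and disposes of the offending components in two steps that are differential-geometric rather than algebraic: it chooses a local vector field $X\in\lambda$ with $X^{\natural}$ closed, for which $\nabla X=f(X^{\natural}\otimes X)$ and hence $R^{\nabla}(W,W^{\prime})X=0$ for $W,W^{\prime}\in\Lambda$, and it proves the pair-interchange symmetry $R(W,W^{\prime},U,U^{\prime})=R(U,U^{\prime},W,W^{\prime})$ on $\Lambda$ from the first Bianchi identity together with the partial skew symmetries \eqref{Rmskew}. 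You instead stay with the $(1,3)$-tensor and argue pointwise and metric-free: $\mathfrak{gn}$-valuedness of the curvature plus boost-weight bookkeeping reduces the claim to the three weight-$(+1)$ families $R^{k}{}_{kkm}$, $R^{l}{}_{lkm}$, $R^{m_i}{}_{m_j k m_l}$, and the first Bianchi identity, combined with the trace relation $R^{k}{}_{kcd}=-R^{l}{}_{lcd}$ (which is exactly the frame form of the first relation in \eqref{Rmskew}) and the $m$-block skewness, kills all three; your sym-plus-antisym argument for $T_{ijl}$ is the elementary core of the paper's pair-symmetry step. What your route buys: no metric is needed at all, and you avoid the vector field with closed dual --- a point where the paper silently relies on integrability of $\Lambda$ (true here, since a torsion-free connection in $P$ gives $[W_{1},W_{2}]=\nabla_{W_{1}}W_{2}-\nabla_{W_{2}}W_{1}\in\Lambda$, but the paper does not flag this dependency); your replacement of that step by the Bianchi computation for $R^{l}{}_{lkm}$ is the main genuine novelty. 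What the paper's route buys: the intermediate facts $R^{\nabla}(W,W^{\prime})X=0$ and the pair symmetry on $\Lambda$ are stronger statements of some independent interest, and the $(0,4)$ formulation connects directly with the boost-order conventions used in the rest of the paper. Your closing caution is also apt: since a connection in $P$ only satisfies $\nabla g$ of type $III$ rather than $\nabla g=0$, the full Riemann symmetries are indeed unavailable, which is why the paper itself asserts only the partial skew symmetries \eqref{Rmskew}.
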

\begin{proof}
Fix a metric $g$ belonging to $P$. Let $R$ be the covariant four-tensor defined by
\begin{equation}
    R(X,Y,Z,W)=g((R^{\nabla})(X,Y)Z,W).
\end{equation}
The connection $\nabla$ belongs to $P$ and therefore  its associated curvature two-form on $P$ is $\mathfrak{gn}$-valued, where $\mathfrak{gn}$ is the Lie algebra of $GN.$ Since in addition $g$ belongs to $P$ this implies that 
\begin{equation}\label{Rmskew}
    R(X,Y,Z,U) = -R(X,Y,U,Z)\quad \text{ and } \quad R(X,Y,W,W^{\prime}) = -R(X,Y,W^{\prime},W),
\end{equation}
for all $X,Y,U\in \mathcal{X}(M)$, $Z\in \lambda$ and $W,W^{\prime}\in \Lambda.$
By abuse of notation it is therefore clear that $Rm^{\nabla}$ is of type $II$ if and only if the terms
\begin{equation}\label{positivecomp}
  R(\lambda,\Lambda,\lambda,TM), \quad R(\Lambda,\Lambda,\lambda,\Lambda),\quad R(TM,\lambda,\lambda,\Lambda),
  \quad R(\Lambda,\lambda,\Lambda,\Lambda),
\end{equation}
vanish.

If $p\in M,$ choose a vector field $X\in \lambda$  in a neighborhood of $p$  such that $X^{\natural}$ is closed. Then there exists some smooth function $f$ such that $\nabla X=f(X^{\natural}\otimes X).$ Hence we see that if $W,W^{\prime}\in \Lambda$, then
\begin{equation}
    R^{\nabla}(W,W^{\prime})X=[\nabla_{W},\nabla_{W^{\prime}}]X-\nabla_{[W,W^{\prime}]}X=0.
\end{equation}
Therefore the two first expressions in \eqref{positivecomp} vanish. We can see that $R(TM,\lambda,\lambda,\Lambda)$ vanishes, from the fact that $\nabla \lambda \subset \lambda$ by proposition \ref{KSConnections}.

Lastly, suppose that $W,W^{\prime},U,U^{\prime}\in \Lambda$.
Since $\nabla$ is Torsion-free, we know that $Rm^{\nabla}$ satisfies the first Bianchi identity. Using \eqref{Rmskew} together with the first Bianchi identity one can show that
\begin{equation}
    R(W,W^{\prime},U,U^{\prime})=R(U,U^{\prime},W,W^{\prime}),
\end{equation}
by copying the proof of the similar statement in the Riemannian setting. In particular this shows that the last expression in \eqref{positivecomp} vanishes, finishing the proof.
\end{proof}
The above proposition shows that if $\nabla$ is a torsion-free connection of a $GN$-structure $P\rightarrow M,$ then its curvature tensor $R^\nabla$ is of type $II.$ By the results of section \ref{KSGNStruct} we can use the $GN$-structure to perform full contractions of tensors in the algebra generated by $R^{\nabla},$ giving smooth functions associated to $\nabla.$
As an example, this gives us a way to defined the scalar curvature of torsion-free connections belonging to $P.$
\newline

Next we show that if $P\rightarrow M$ is a Kundt-structure, then we have an induced map
\begin{equation}
    \Phi:\mathcal{M}\rightarrow \Omega^{0}(\Lambda/\lambda).
\end{equation}
from the space of metrics belonging to $P$ into the space of sections of $\Lambda/\lambda.$

If $X$ is a local infinitesimal automorphism of $P$ belonging to $\lambda$ such that $X^{\natural}$ is closed and $g\in \mathcal{M},$ then $\mathcal{L}_{X}g$ is of type $III$ and can therefore be written uniquely as
\begin{equation}
    \mathcal{L}_{X}g=X^{\natural}\otimes_{S}\omega,
\end{equation}
where $\omega\in \Lambda^{*}$. Now supposing that $\tilde{X}$ is another local infinitesimal automorphism such that $\tilde{X}^{\natural}$ is closed, then there exists a smooth function $f$ such that $\tilde{X}=fX$ and $W(f)=0,$ for all $W\in \Lambda, $ implying that $df\in \lambda^{*}.$ Therefore
\begin{equation}
    \tilde{X}\otimes\tilde{\omega}=\mathcal{L}_{\tilde{X}}g=\mathcal{L}_{fX}g=df\otimes_{s}X^{\natural}+f\mathcal{L}_{X}g = \tilde{X}\otimes (\frac{1}{f}df+\omega).
\end{equation}
It follows that the classes $[\omega],[\tilde{\omega}]$ are the same in $\Lambda^{*}/\lambda^{*}.$ Using the induced metric $g^{\perp}$ we take the dual of $[\omega]$ on $\Lambda/\lambda$ to find a local section in $\Lambda/\lambda.$  

Since the resulting section is independent of which infinitesimal automorphism with closed dual we use, this process extends globally to give a well-defined section $\Phi(g)\in\Omega^{0}(\Lambda/\lambda)$.

By taking the norm with respect to $g^{\perp},$ the map $\Phi:\mathcal{M}\rightarrow \Omega^{0}(\Lambda/\lambda)$ gives rise to a map \begin{equation}\Theta:\mathcal{M}\rightarrow C^{\infty}(M),\end{equation} defined by
\begin{equation}
    \Theta(g)=g^{\perp}(\Phi(g),\Phi(g)),
\end{equation}
for all $g\in \mathcal{M}.$
\section{Degenerate Kundt metrics.}

Recall from \cite{KundtSpacetimes} that a Kundt spacetime $(M,g,\lambda)$ is said to be degenerate if the Riemannian curvature and all its covariant derivative $\nabla^{m}Rm$ are of type $II$ with respect to $\lambda.$ We have the following classification from \cite{NilKilling}
\begin{proposition}\label{DegkundtChar}
Let $(M,g,\lambda)$ be a Kundt space-time and let $X$ be a kundt vector field defined on an open set $U\subset M.$
Then
\begin{enumerate}[i)]
    \item The Riemannian curvature is of type $II$ on $U$ iff. $(\mathcal{L}_{X})^{2}g$ is of boost-order $\leq -2$ w.r.t $\lambda$
    \item $(U,g,\lambda)$ is a degenerate Kundt spacetime iff. $(\mathcal{L}_{X})^{2}g$ is of boost-order $\leq -2$ and $(\mathcal{L}_{X})^3g=0.$
\end{enumerate}
\end{proposition}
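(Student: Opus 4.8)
The plan is to pass to Kundt normal coordinates adapted to $X$ and to translate both the hypotheses and the curvature conditions into statements about the $v$-dependence of the metric functions. Since $X$ is a Kundt vector field it is nil-Killing w.r.t. $\lambda$, so I may choose local coordinates $(u,v,x^{k})$ with $X=\partial_{v}$, $\lambda=\mathrm{span}(\partial_{v})$, $\Lambda=\mathrm{span}(\partial_{v},\partial_{x^{k}})$, $X^{\natural}=du$, and
\[
g=2\,du\,dv+2W_{i}\,du\,dx^{i}+H\,du^{2}+g^{\perp}_{ij}\,dx^{i}\,dx^{j},
\]
where $\partial_{v}g^{\perp}_{ij}=0$ because $\mathcal{L}_{X}g$ is of type $III$. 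In the coframe $\{du,dv,dx^{i}\}$ the boost weights are $-1,+1,0$, so a covariant $2$-tensor has boost order $\le-2$ iff its only nonzero component is the $du\otimes du$ one. As $X=\partial_{v}$ one has $(\mathcal{L}_{X}^{k}g)_{ab}=\partial_{v}^{k}g_{ab}$, hence
\[
\mathcal{L}_{X}^{k}g=(\partial_{v}^{k}H)\,du\otimes du+(\partial_{v}^{k}W_{i})\,(du\otimes dx^{i}+dx^{i}\otimes du),
\]
the first summand of weight $-2$ and the second of weight $-1$. Thus $\mathcal{L}_{X}^{2}g$ has boost order $\le-2$ iff $\partial_{v}^{2}W_{i}=0$, and ($\mathcal{L}_{X}^{2}g$ has boost order $\le-2$ together with $\mathcal{L}_{X}^{3}g=0$) iff $\partial_{v}^{2}W_{i}=0$ and $\partial_{v}^{3}H=0$.

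For $i)$ I compute the positive boost-weight components of $R_{abcd}=g(R^{\nabla}(\partial_{a},\partial_{b})\partial_{c},\partial_{d})$. The weight $+2$ part is carried only by $R(\partial_{v},\partial_{i},\partial_{v},\partial_{j})$; since $\nabla_{\partial_{v}}\partial_{v}=0$ and $\nabla_{\partial_{i}}\partial_{v}=\tfrac12(\partial_{v}W_{i})\partial_{v}\in\lambda$, one gets $R^{\nabla}(\partial_{v},\partial_{i})\partial_{v}=\tfrac12(\partial_{v}^{2}W_{i})\partial_{v}$, which pairs to zero against $\partial_{v},\partial_{j}$, so the weight $+2$ part vanishes identically. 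The weight $+1$ part is carried by $R(\partial_{v},\partial_{i},\partial_{j},\partial_{k})$ and by $R(\partial_{v},\partial_{u},\partial_{v},\partial_{i})$; the former vanishes identically since $\Gamma^{u}_{ij}=0$ and $\Gamma^{l}_{ij}$ equals the $v$-independent Christoffel symbol of $g^{\perp}$, while the latter evaluates to $\tfrac12\,\partial_{v}^{2}W_{i}$. Hence $R^{\nabla}$ is of type $II$ iff $\partial_{v}^{2}W_{i}=0$, which by the dictionary is exactly ``$\mathcal{L}_{X}^{2}g$ of boost order $\le-2$''. Invariantly this reflects the geodesic-deviation identity for the congruence $X$, tying $\mathcal{L}_{X}^{2}g$ to the tidal term $R(X,\,\cdot\,,X,\,\cdot\,)$, whose nonvanishing is precisely the weight $+1$ obstruction to type $II$.

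For $ii)$, assume first that $R^{\nabla}$ is of type $II$, so $\partial_{v}^{2}W_{i}=0$ by $i)$ and $W_{i}$ is affine in $v$. A direct computation gives the weight $0$ component $R(\partial_{u},\partial_{v},\partial_{u},\partial_{v})=\tfrac12\,\partial_{v}^{2}H-\tfrac14\,g^{\perp}_{ij}(\partial_{v}W^{i})(\partial_{v}W^{j})$, which carries $\partial_{v}^{2}H$. Differentiating, the weight $+1$ part of $\nabla R^{\nabla}$ contains $\nabla_{\partial_{v}}R(\partial_{u},\partial_{v},\partial_{u},\partial_{v})$, and using $\partial_{v}^{2}W_{i}=0$ (so that $\partial_{v}W_{i}$ and $g^{\perp}$ are $v$-free and the connection corrections drop out) this reduces to $\tfrac12\,\partial_{v}^{3}H$. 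Hence if $(U,g,\lambda)$ is degenerate then $\nabla R^{\nabla}$ is of type $II$ and $\partial_{v}^{3}H=0$, i.e. $\mathcal{L}_{X}^{3}g=0$. Conversely, assuming $\partial_{v}^{2}W_{i}=0$ and $\partial_{v}^{3}H=0$ — so $H$ is at most quadratic and $W_{i}$ at most affine in $v$, while $g^{\perp}$ is $v$-free — every metric function is polynomial of degree $\le2$ in $v$, and I prove by induction on $m$ that each $\nabla^{m}R^{\nabla}$ is of type $II$: the inductive step is that the weight-positive part of $\nabla^{m}R^{\nabla}$ is produced by applying $\nabla_{\partial_{v}}$ to the weight-$0$ part of $\nabla^{m-1}R^{\nabla}$, and every such contribution carries a factor $\partial_{v}^{p}W$ with $p\ge2$ or $\partial_{v}^{q}H$ with $q\ge3$, all of which vanish.

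The routine ingredients are the coordinate dictionary and the two explicit curvature components $R(\partial_{v},\partial_{u},\partial_{v},\partial_{i})=\tfrac12\,\partial_{v}^{2}W_{i}$ and $R(\partial_{u},\partial_{v},\partial_{u},\partial_{v})=\tfrac12\,\partial_{v}^{2}H-\tfrac14\,g^{\perp}_{ij}(\partial_{v}W^{i})(\partial_{v}W^{j})$. The main obstacle is the induction in the converse of $ii)$: one must control, for every $m$, how covariant differentiation interacts with the boost-weight filtration, since the connection coefficients themselves carry $\partial_{v}H$ and $\partial_{v}W$ and could in principle feed positive boost weight back from lower-weight components. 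The clean lemma to isolate is that $\nabla_{\partial_{v}}$ raises boost weight by exactly one and, modulo terms of strictly lower order of $v$-differentiation, acts as $\partial_{v}$ on frame components; granting this, iterating shows that the top weight-positive part of $\nabla^{m}R^{\nabla}$ is proportional to $\partial_{v}^{m+2}H$ together with $\partial_{v}^{m+1}W$ terms, all of which vanish under the hypotheses, closing the induction.
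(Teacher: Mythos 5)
First, a remark on the comparison itself: the paper does not prove Proposition~6 --- it quotes it from the reference cited as \emph{NilKilling} --- so your argument can only be judged on its own merits. On those merits, most of it checks out: the coordinate dictionary, the Christoffel facts $\nabla_{\partial_v}\partial_v=0$, $\nabla_{\partial_i}\partial_v=\tfrac12(\partial_vW_i)\partial_v$, the component identities $R(\partial_v,\partial_i,\partial_v,\partial_j)=0$, $R(\partial_v,\partial_u,\partial_v,\partial_i)=\tfrac12\partial_v^2W_i$, $R(\partial_u,\partial_v,\partial_u,\partial_v)=\tfrac12\partial_v^2H-\tfrac14 g^{\perp ij}\partial_vW_i\partial_vW_j$, hence part $i)$ and the forward half of $ii)$, are all correct. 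However, your very first step is not available in the stated generality: the proposition quantifies over an \emph{arbitrary} Kundt vector field $X$, and in Kundt coordinates such a field is $X=\phi(u,x^k)\partial_v$ with $\partial_v\phi=0$ but $\phi$ otherwise arbitrary (any such field is geodesic, shear- and divergence-free). It can be straightened to $X=\partial_{\tilde v}$ in some Kundt chart only when $X^{\natural}=\phi\,du$ is closed, i.e. $\partial_i\phi=0$, which fails generically. This gap is reparable: since $\mathcal{L}_Xdu=0$, $\mathcal{L}_Xd\phi=0$ and $i_{\partial_v}\mathcal{L}_{\partial_v}^kg=0$, one computes $(\mathcal{L}_X)^2g=\phi^2(\mathcal{L}_{\partial_v})^2g$ and $(\mathcal{L}_X)^3g=\phi^3(\mathcal{L}_{\partial_v})^3g$, so your dictionary survives with $\phi$ factored out; but as written the reduction is unjustified.

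The serious gap is the converse of $ii)$. Your ``clean lemma'' is true and provable: for a Kundt metric all weight-raising Christoffel symbols vanish ($\Gamma^u_{vv},\Gamma^u_{vi},\Gamma^u_{uv},\Gamma^u_{ij},\Gamma^l_{vv},\Gamma^l_{vi},\Gamma^v_{vv}$ are all zero), whence for a type $II$ tensor $T$ the only possibly nonzero positive-weight components of $\nabla T$ are $(\nabla_vT)_A=\partial_vT_A$ with $A$ of weight $0$. But this lemma alone does \emph{not} close your induction: it gives ``$\nabla^{m}R$ is type $II$'' from ``$\nabla^{m-1}R$ is type $II$'' only if you also know that the weight-$0$ components of $\nabla^{m-1}R$ are $v$-independent, and nothing in the hypothesis ``type $II$'' supplies that. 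The $v$-dependence cascades up from below: the weight-$0$ components of $\nabla^{m-1}R$ contain $\partial_v$ of the weight-$(-1)$ components of $\nabla^{m-2}R$, those contain $\partial_v$ of weight-$(-2)$ components, and so on, and the Christoffel symbols entering at negative weights ($\Gamma^v_{uv},\Gamma^u_{uu},\Gamma^v_{ij},\Gamma^l_{ui},\dots$) genuinely depend on $v$. To make the induction close you must strengthen the inductive hypothesis to a quantitative statement at \emph{every} weight --- for instance: each weight-$w$ component of $\nabla^mR$ is polynomial in $v$ of degree at most $-w$ (and zero for $w>0$) --- and verify the matching bound for the connection, namely that each $\Gamma^b_{ca}$ of the degenerate Kundt metric has $v$-degree at most $w(b)-w(c)-w(a)$, together with the base case for $R$ itself. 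With that bookkeeping the argument goes through, but your sketch, which tracks only the positive and zero weights and only the action of $\nabla_{\partial_v}$, asserts rather than proves exactly the step where the difficulty lives; this is the heart of the hardest implication of the proposition.
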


Suppose $P\rightarrow M$ is a Kundt structure with corresponding null-quadruple \begin{equation}(\lambda,\Lambda,g^{\perp},f).\end{equation} Let \begin{equation}\mathcal{M}_{Deg}\subset \mathcal{M}\end{equation} be the collection of metrics $g\in \mathcal{M}$ such that the corresponding Kundt spacetime $(M,g,\lambda)$ is degenerate.
As a result of proposition \ref{nil} and corollary \ref{KSKundtCorollary} we can characterize the metrics $g\in\mathcal{M}_{Deg}$ as those for which $(\mathcal{L}_{X})^{2}g$ is of boost-order $\leq -2$ and $(\mathcal{L}_{X})^{3}g=0,$ whenever $X$ is a local infinitesimal automorphism of $P$ belonging to $\lambda.$

We can use this characterization in order to show that $P\rightarrow M$ induces a map
\begin{equation}
    \Psi:\mathcal{M}_{Deg}\rightarrow C^{\infty}(M),
\end{equation}
defined as follows: If $X$ is any local infinitesimal automorphism of $P$ on an open set $U\subset M$ and $g\in \mathcal{M}_{Deg},$ then $(\mathcal{L}_{X})^2g$ is of boost-order $\leq -2$ and therefore there exists a smooth function $H\in C^{\infty}(U)$ such that
\begin{equation}
\mathcal{L}_{X}g=H(X^{\natural}\otimes X^{\natural}).
\end{equation}
If $\tilde{X}$ is another infinitesimal automorphism on $U$, then there exists a smooth function $f$ satisfying $X(f)=0,$ such that $\tilde{X}=fX.$ This gives
\begin{equation}
\begin{gathered}
    \tilde{H}(\tilde{X}^{\natural}\otimes\tilde{X}^{\natural})=(\mathcal{L}_{\tilde{X}})^{2}g\\
    =(\mathcal{L}_{fX})^{2}g=\mathcal{L}_{fX}(df\otimes_{S}X^{\natural}+f\mathcal{L}_Xg)\\
    =f\mathcal{L}_{fX}\mathcal{L}Xg=f[df\otimes (i_{X}\mathcal{L}_{X}g+f(\mathcal{L}_{X})^{2}g]\\
    =f^{2}\mathcal{L}_{X}g=H(\tilde{X}^{\natural}\otimes \tilde{X}^{\natural}),
    \end{gathered}
\end{equation}
where $i_{X}\mathcal{L}_{X}g$ is a contraction, showing that $\tilde{H}=H.$ Since the functions we have constructed are independent of the choice of infinitesimal automorphism, we can use these to construct a global function $\Psi(g)\in C^{\infty}(M)$. 

The metric $g$ also satisfies $(\mathcal{L}_{X})^3g=0,$ and therefore $\mathcal{L}_{X}\Psi(g)=0,$ showing that $\Psi(g)$ is invariant under the local automorphisms of $P.$ 

If $g\in \mathcal{M}_{Deg}$, then we can find coordinates $(u,v,x^{k})$ such that $\frac{\partial}{\partial v}$ is a local infinitesimal automorphism of $P$ and $du=(\frac{\partial}{\partial v})^{\natural}$ and the metric can be expressed by
\begin{equation}\label{DegkundtCoordinates}
    g=2du(dv+Hdu + W_{i}dx^{i}) + \tilde{g}_{ij}(u,x^{k})dx^{i}dx^{j}
\end{equation}
where 
the functions $H,W_{i}$ take the form
\begin{equation}
    H(u,v,x^{k})=v^2H^{(2)}(u,x^k)+vH^{(1)}(u,x^k)+H^{(0)}(u,x^k)
\end{equation}
and
\begin{equation}
W_{i}(u,v,x^{k})=vW^{(1)}_{i}(u,x^{k}) +W^{(0)}_{i}(u,x^{k}).
\end{equation}
In terms of these coordinates we see that $\Psi(g)=2H^{(2)}$, and moreover the section of $\Lambda/\lambda$ given by $\Phi(g)$, which we found in section \ref{KSKundtStructuresSection}, can be expressed in terms of the functions $W_{i}^{(1)}.$

It therefore follows from \cite{KundtSpacetimes} that if $\tilde{g},g\in \mathcal{M}_{Deg}$ satisfy
\begin{equation}
    \Phi(g)=\Phi(\tilde{g})\quad \text{ and } \quad \Psi(g)=\Psi(\tilde{g}),
\end{equation}
Then $g$ and $\tilde{g}$ have identical scalar polynomial curvature invariants. 

Moreover, if $g\in \mathcal{M}_{Deg}$ has constant scalar polynomial curvature invariants across the manifold, then it follows from \cite{CSI4} that $\Psi(g)$ and $\Theta(g)$ differ only by a constant.

\section{Homogeneous GN-structures}
In this section we shall classify left-invariant $GN$ and Kundt structures on homogeneous spaces and study their properties.

We shall consider homogeneous spaces which are quotients $K/H$ of Lie groups $K$ and closed Lie subgroups $H,$ where we assume that the action of $K$ on $K/H$ is effective. By abuse of notation, if $a\in K$ we let $L_{a}$ denote left multiplication of $a$ with elements of both $K$ and $K/H.$  We denote the coset of the identity element $e$ of $K$ by $o$ and refer to it as the origin in $K/H$ and let $\mathfrak{k}$ and $\mathfrak{h}$ be the Lie algebras of $K$ and $H$ respectively. We have a projection $\pi:K\rightarrow K/H$, whose derivative at the identity $(d\pi)_{e}:\mathfrak{k}\rightarrow T_{o}K/H$, induces an isomorphism $\mathfrak{k}/\mathfrak{h}\cong T_{o}K/H,$ which we henceforth use to identify these two vector spaces. Under this identification the vector space quotient $q:\mathfrak{k}\rightarrow \mathfrak{k}/\mathfrak{h}$ and $(d\pi)_{e}$ are the same. If $h\in H,$ then $Ad(h):\mathfrak{k}\rightarrow \mathfrak{k}$ leaves $\mathfrak{h}$ invariant. Therefore we have an induced linear map between vector spaces $Ad(h):\mathfrak{k}/\mathfrak{h}\rightarrow \mathfrak{k}/\mathfrak{h}$. 

For each element $A\in \mathfrak{k},$ the one-parameter subgroup $a_{t}=exp(tA)$, induces a one parameter group of diffeomorphisms on $K/H$ given by left multiplication $L_{a_{t}}$, for $t\in \mathbb{R},$ which defines a vector field which we denote by $A^{*}$ on $K/H.$ Under this association we have 
\begin{equation}
    [A,B]^{*}=-[A^{*},B^{*}],
\end{equation}
for all $A,B\in \mathfrak{k}$. 

On the homogeneous space a $G$-structure $Q\rightarrow K/H$ is said to be $K$-invariant if for each $a\in K,$ the left multiplication $L_{a}:K/H\rightarrow K/H$ is an automorphism of $Q.$ In this case $A^{*}$ is an infinitesimal automorphism of $Q,$ for each $A\in \mathfrak{k},$ since the diffeomorphisms of the flow of $A^{*}$ are given by left multiplication by elements in $K.$

Now let us classify the left invariant $GN$-structures and Kundt structures on $K/H.$ In our classification we shall consider quadruples $(\mathfrak{a},\mathfrak{b},(\cdot,\cdot),\beta)$ where

\begin{enumerate}[i)]
    \item $\mathfrak{h}\subset\mathfrak{a}\subset \mathfrak{b}\subset\mathfrak{k}$ and $\mathfrak{a},\mathfrak{b}$ are $ad(H)$-invariant subspaces such that $\mathfrak{h}$ and $\mathfrak{b}$ are of codimension $1$ in $\mathfrak{a}$ and $\mathfrak{k}$ respectively.   
    \item $(\cdot,\cdot)$ is a positive definite inner product on $\mathfrak{b}/\mathfrak{a}$ which is $ad(H)$ invariant, i.e.  for each $h\in H$ the, linear map $ad(h):\mathfrak{b}/\mathfrak{a}\rightarrow \mathfrak{b}/\mathfrak{a}$ whose existence is ensured by i), is an isometry of $(\cdot,\cdot).$
    \item $\beta:\mathfrak{a}/\mathfrak{h}\otimes \mathfrak{k}/\mathfrak{b}\rightarrow \mathbb{R}$ is a non-zero $ad(H)$-invariant functional.
\end{enumerate}
We shall refer to these as $ad(H)$-invariant quadruples.

\begin{theorem}\label{KSGNHom}
There is a 1:1 correspondence between $K$-invariant $GN$-structures \newline $P\rightarrow K/H$ on $K/H$ and $ad(H)$-invariant quadruples $(\mathfrak{a},\mathfrak{b},(\cdot,\cdot),\beta)$ on $\mathfrak{k}/\mathfrak{h}.$ Under this correspondence the $K$-invariant Kundt Structures are given by the $ad(H)$-invariant quadruples  $(\mathfrak{a},\mathfrak{b},(\cdot,\cdot),\beta)$ such that 
\begin{enumerate}[i)]
\item $\mathfrak{b}$ is a Lie subalgebra of $\mathfrak{k}$,
\item The induced maps $[a,-]\in End(\mathfrak{b}/\mathfrak{a})$ are skew-adjoint with respect to the inner product $(\cdot,\cdot),$ for all $a\in\mathfrak{a}$.
\end{enumerate}
\end{theorem}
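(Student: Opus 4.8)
The plan is to pass through the null-quadruple description of $GN$-structures and reduce everything to linear algebra at the origin $o$. For the correspondence itself, I would first note that since the bijection between $GN$-structures and null-quadruples $(\lambda,\Lambda,g^\perp,f)$ is natural with respect to diffeomorphisms, a $GN$-structure $P\rightarrow K/H$ is $K$-invariant precisely when its null-quadruple is $K$-invariant. A $K$-invariant null-quadruple is determined by its value at $o$ via left translation, and homogeneity forces that value to be fixed by the isotropy action, which on $T_oK/H\cong\mathfrak{k}/\mathfrak{h}$ is $ad(H)$. Reading off the data at $o$ gives the dictionary $\lambda_o=\mathfrak{a}/\mathfrak{h}$, $\Lambda_o=\mathfrak{b}/\mathfrak{h}$, $g^\perp_o=(\cdot,\cdot)$ on $\mathfrak{b}/\mathfrak{a}\cong\Lambda_o/\lambda_o$, and $f_o=\beta$ on $\mathfrak{a}/\mathfrak{h}\otimes\mathfrak{k}/\mathfrak{b}$; the dimension and inclusion constraints on $\lambda,\Lambda$ translate exactly into the conditions defining an $ad(H)$-invariant quadruple. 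Conversely such a quadruple spreads by left translation to a $K$-invariant null-quadruple, $ad(H)$-invariance being precisely what guarantees well-definedness over cosets, and this yields the claimed bijection.

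For the integrability clause I would pull $\Lambda$ back to $K$. Setting $\tilde\Lambda=(d\pi)^{-1}(\Lambda)\subseteq TK$, the $K$-invariance of $\Lambda$ together with equivariance of $\pi$ makes $\tilde\Lambda$ left-invariant with $\tilde\Lambda_e=\mathfrak{b}$. A left-invariant distribution is involutive iff its defining subspace is a subalgebra (check brackets on left-invariant fields via $[A^{L},B^{L}]=[A,B]^{L}$), and since $\tilde\Lambda$ contains the vertical bundle and projects onto $\Lambda$, involutivity of $\tilde\Lambda$ is equivalent to involutivity of $\Lambda$. Hence $\Lambda$ is integrable iff $\mathfrak{b}$ is a subalgebra, which is condition i).

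The heart of the matter is condition ii), which governs when $P$ admits the local infinitesimal automorphism in $\lambda$ demanded by the Kundt definition. I would first reduce to the existence of one such field near $o$: since each $L_a$ is an automorphism of $P$, the $K$-translates of a single local infinitesimal automorphism in $\lambda$ supply the ones needed about every point. By Proposition \ref{nil} a field $X\in\lambda$ is an infinitesimal automorphism iff it is nil-Killing, i.e. $\mathcal{L}_Xg$ is of type $III$ for some $g\in\mathcal{M}$; choosing $X$ with $X^{\natural}$ closed (possible once $\Lambda$ is integrable) makes the $(\lambda,TM)$-part of $\mathcal{L}_Xg$ vanish automatically, so the condition collapses to $\mathcal{L}_Xg^\perp=0$ on $\Lambda/\lambda$. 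The key computation evaluates $\mathcal{L}_Xg^\perp$ at a point $p=b\cdot o$ by writing $X=\mu B^{*}+Y$ with $B=Ad(b)a_0$, where $a_0$ spans $\mathfrak{a}/\mathfrak{h}$ and $Y_p=0$. Using that the fundamental field $B^{*}$ is an infinitesimal automorphism (so $\mathcal{L}_{B^{*}}g^\perp=0$, and the $d\mu$-terms drop because $B^{*}_p\in\lambda_p$), that $[X,V^{*}]_p\in\lambda_p$ for any fundamental field $V^{*}$ since $V^{*}$ normalises $\lambda$ and $X\in\lambda$, the identity $[A^{*},B^{*}]=-[A,B]^{*}$, and the $K$-invariance of $g^\perp$, the endomorphism of $\Lambda_p/\lambda_p$ induced by $Y$ reduces modulo $\lambda_p$ to $-\mu\,(L_b)_*\circ[a_0,-]\circ(L_b)_*^{-1}$. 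Transporting back to $o$ shows that $(\mathcal{L}_Xg^\perp)_p$ is the $(\cdot,\cdot)$-symmetrisation of $-\mu\,[a_0,-]$, vanishing at every $p$ exactly when $[a_0,-]$ is skew-adjoint on $\mathfrak{b}/\mathfrak{a}$. Since $ad(H)$-invariance of $(\cdot,\cdot)$ already makes $[Z,-]$ skew-adjoint for $Z\in\mathfrak{h}$, this is precisely condition ii) for all $a\in\mathfrak{a}$. Running the computation at $o$ for a normalised infinitesimal automorphism gives the forward implication, while the general-point version gives $\mathcal{L}_Xg^\perp\equiv0$ for the closed-dual field $X$, hence an infinitesimal automorphism in $\lambda$.

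The main obstacle, I expect, is this last step: the pointwise identity controls $\mathcal{L}_Xg^\perp$ only at $o$, whereas being an infinitesimal automorphism requires it to vanish on a whole neighbourhood. The device that overcomes this is to run the computation at an arbitrary nearby $p=b\cdot o$ using the fundamental field of $Ad(b)a_0$ as comparison; $K$-invariance of $g^\perp$ then collapses the condition at $p$ back to skew-adjointness of $[a_0,-]$ at $o$, so condition ii) forces simultaneous vanishing at all points. Care is also needed to check that the fundamental fields genuinely are infinitesimal automorphisms normalising $\lambda$, and that $[a_0,-]$ really descends to $\mathfrak{b}/\mathfrak{a}$; both follow from $\mathfrak{a}$ being a subalgebra (automatic from $ad(H)$-invariance, as $[\mathfrak{h},a_0]\subseteq\mathfrak{a}$ and $[a_0,a_0]=0$) together with condition i).
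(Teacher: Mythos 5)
Your proposal is correct and follows essentially the same route as the paper's proof: reading the null-quadruple off at the origin and extending by left translation, reducing integrability of $\Lambda$ to $\mathfrak{b}$ being a subalgebra via the corresponding left-invariant distribution on $K$, and characterising the extra automorphism condition by Lie-differentiating a metric belonging to $P$ along fundamental vector fields, using $[A^{*},B^{*}]=-[A,B]^{*}$ together with Proposition \ref{nil}. Your two departures are organisational rather than substantive: you run the key computation at an arbitrary point $b\cdot o$ with the $Ad(b)$-twisted generator, where the paper computes at $o$ and then propagates via $(L_{b})^{*}\mathcal{L}_{X}g=\mathcal{L}_{fX}\bigl((L_{b})^{*}g\bigr)$ and the fact that $(L_{b})^{*}g$ belongs to $P$, and you kill the $(\lambda,TM)$-components of $\mathcal{L}_{X}g$ by choosing $X^{\natural}$ closed, where the paper instead picks $X$ with $g(X,Z^{*})$ constant.
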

\begin{proof}
Suppose that we are given a $K$-invariant $GN$-structure on $K/H$ and let $(\lambda,\Lambda,g^{\perp},f)$ be its associated null-quadruple. From the $K$ invariance it follows that the subspaces $\Lambda_{0}, \lambda_{0}\subset \mathfrak{k}/\mathfrak{h}$
and the metric $g^{\perp}_{0}$ on $\Lambda_{0}/\lambda_{0}$ are $ad(H)$-invariant, and the functional $f_{0}:\lambda_{0}\otimes (\mathfrak{k}/\mathfrak{h})/\Lambda_{0}\rightarrow$ satisfies $f_{0}\circ ad(h)= f_{0},$ for all $h\in H.$ 

Now letting $q:\mathfrak{k}\rightarrow \mathfrak{k}/\mathfrak{h}$ denote the quotient map, we define the following $ad(H)$-invariant subspaces of $\mathfrak{k}$:
\begin{equation}
    \mathfrak{a}:=q^{-1}(\lambda_{0}),\quad\mathfrak{b}:=q^{-1}(\Lambda_{0}),
\end{equation}
then $h$ has co-dimension $1$ in $\mathfrak{a}$ and $\mathfrak{b}$ has co-dimension $1$ in $\mathfrak{k}$ since $\lambda_{0}$ and $\Lambda_{0}$ have dimension $1$ and co-dimension $1$ respectively.

Let $\beta:\mathfrak{a}/\mathfrak{h}\otimes \mathfrak{k}/\mathfrak{b}\rightarrow \mathbb{R}$ be defined through $f_{0}$ by using the identifications $\mathfrak{a}/\mathfrak{h}=\lambda_{0}$ and $\mathfrak{k}/\mathfrak{b}\cong (\mathfrak{k}/\mathfrak{h})/(\mathfrak{b}/\mathfrak{h})=(\mathfrak{k}/\mathfrak{h})/\Lambda_{0}.$ Then by the $ad(H)$-invariance of $f_{0}$ we see that $\beta$ must also be $ad(H)$-invariant.

Lastly we can use the isomorphism $\mathfrak{b}/\mathfrak{a}\cong(\mathfrak{b}/\mathfrak{h})/(\mathfrak{a}/\mathfrak{h})=\Lambda_{0}/\lambda_{0}$, together with $g^{\perp}$ to induce an $ad(H)$-invariant positive definite inner product $(\cdot,\cdot)$ on $\mathfrak{b}/\mathfrak{a}$. 

Thus we have constructed a correspondence from $K$-invariant $GN$-structures to $ad(H)$-invariant quadruples for $\mathfrak{k}/\mathfrak{h}.$ Since any two $K$-invariant $GN$-structures giving the same \newline $ad(H)$-quadruple must have identical fibres over the origin of $K/H$, the $K$-invariance implies that they must coincide everywhere, so the correspondence is injective. Surjectivity follows from the fact that we may induce an $ad(H)$-invariant null-quadruple $(\lambda_{0},\Lambda_{0},g^{\perp},f_{0})$ over the origin of $K/H$ from any $ad(H)$-quadruple $(\mathfrak{a},\mathfrak{b},(\cdot,\cdot),\beta)$ and extend it 
to a $GN$-structure by using the left translations by $K.$
\newline

Let $(\mathfrak{a},\mathfrak{b},(\cdot,\cdot),\beta)$ be an $ad(H)$-invariant quadruple corresponding to a $K$-invariant $GN$-structure $P\overset{\pi}{\rightarrow} K/H$.  

First we show that $P$ is integrable iff. $\mathfrak{b}$ is a Lie subalgebra of $\mathfrak{k}$: Let $\Lambda$ be the codimension $1$ distribution on $K/H$ associated to $P$ and let $\tilde{\Lambda}$ be the distribution on $K$ defined by $\tilde{\Lambda}_{g}=(L_{g})_{*}(\mathfrak{b})$, for all $g\in K.$ From the definition it is clear that $\tilde{\Lambda}$ is integrable iff. $\mathfrak{b}$ is a Lie subalgebra of $\mathfrak{k}$. It suffices therefore to prove that $\Lambda$ is integrable iff. $\tilde{\Lambda}$ is integrable. Let $U\subset K/H$ be a neighborhood such that there exists a local trivalization $\pi^{-1}(U)\overset{\Phi}{\rightarrow} U\times H$ for the principal $H$-bundle $K\overset{\pi}{\rightarrow} K/H$ and a frame $\{X_{1},\dots X_{m}\}$ over $U$ for $\Lambda$. We use $\Phi$ to construct vector fields $\{\tilde{X}_{1},\dots,\tilde{X}_{k}\}$ on $\pi^{-1}(U)$ such that $\pi_{*}(\tilde{X}_{i})=X_{i},$ for $1\leq i\leq k.$ Let $Y_{1},\dots,Y_{m}$ be a basis for $\mathfrak{h}$. Since $\mathfrak{b}$ is $ad(H)$-invariant it follows that $[Y_{i},\tilde{X}_{l}]$ and $[Y_{i},Y_{j}]$ belongs to $\tilde{\Lambda}$ for all $1\leq i,j \leq m$ and $1\leq l\leq k$. Thus $\tilde{\Lambda}$ is integrable on $\pi^{-1}(U)$ iff. $[\tilde{X}_{i},\tilde{X}_{j}]$ belongs to $\tilde{\Lambda}$, for all $1\leq i,j\leq k$. Since $\tilde{X}_{i}$ and $X_{i}$ are $\pi$-related this holds iff. $\Lambda$ is integrable over $U.$ This shows that $\Lambda$ is integrable iff. $\tilde{\Lambda}$ is integrable and hence that $\Lambda$ is integrable iff. $\mathfrak{b}$ is a Lie subalgebra of $\mathfrak{k}.$
 
 Let $\lambda$ be the 1-distribution associated to $P$. Here we show that there are local infinitesimal automorphisms of $P$ pointing along $\lambda$ iff. $[a,-]\in End(\mathfrak{b}/\mathfrak{a})$ is skew-adjoint with respect to $(\cdot,\cdot)$: First we choose a metric $g$ and a vector field $X$ defined on a neighborhood $U$ of the origin in $K/H$ such that $g$ belongs to $P$ and $X$ belongs to $\lambda$. Let $N\in \mathfrak{a}\setminus \mathfrak{h} $ and $W_{i}\in \mathfrak{b}$, for $i=1,2$, then $({N^{*}})_{o}\in \lambda_{o}$ and $({W_{i}}^{*})_{o}\in \Lambda_{o}.$ Since $N^{*}$ is an infinitesimal automorphism of $P$, we see that $\mathcal{L}_{N^{*}}g$ is nilpotent with respect to $X$. Therefore at the origin we have
\begin{equation}\label{Neq}
 \begin{gathered}
 \mathcal{L}_{N^{*}}g(({{W}_{1}}^{*})_{o},({{W}_{2}}^{*})_{o})\\
 =({N}^{*})_{o}g({W_{1}}^{*},{{W}_{2}}^{*}) -g([{N}^{*},{{W}_{1}}^{*}]_{o},({{W}_{2}}^{*})_{o})-g(({{W}_{1}}^{*})_{o},[{N}^{*},{{W}_{2}}^{*}]_{o})\\
 =({N}^{*})_{o}g({W_{1}}^{*},{{W}_{2}}^{*}) +g({[N,W_{1}]^{*}}_{o},({W_{2}}^{*})_{o})+g(({W_{1}}^{*})_{o},{[N,W_{2}]^{*}}_{o})\\
 =({N}^{*})_{o}g({W_{1}}^{*},{{W}_{2}}^{*})+([N,{\overline{W}}_{1}],\overline{W}_{2})+(\overline{W}_{1},[N,\overline{W}_{2}])\\
 =0,
 \end{gathered}
\end{equation}
where $\overline{W}_{i}$ denotes the coset of $W_{i}$ in $\mathfrak{b}/\mathfrak{a}$, for $i=1,2.$
 
 Furthermore since ${{W}_{i}}^{*}$, for $i=1,2,$ are infinitesimal automorphisms of $P$, it follows that
 \begin{equation}\label{eqX}
 \begin{gathered}
     \mathcal{L}_{X}g(({{W}_{1}}^{*})_{o},({{W}_{2}}^{*})_{o}) \\ =X_{o}(g({{W}_{1}}^{*},{{W}_{2}}^{*}))-g([X,{{W}_{1}}^{*}]_{o},({{W}_{2}}^{*})_{o})-g(({{W}_{1}}^{*})_{o},[X,{{W}_{2}}^{*}]_{o})\\
     =X_{o}(g({{W}_{1}}^{*},{{W}_{2}}^{*})),
\end{gathered}     
 \end{equation}
where the last equality holds since $[X,W_{i}^*]\propto X$.

Suppose first that $P$ has local infinitesimal automorphisms belonging to $\lambda$. Then letting $X$ be such we know that $\mathcal{L}_{X}g$ is nilpotent with respect to $X$. Since $(N^{*})_{o}$ and $X_{o}$ are colinear equation \eqref{eqX} implies that
    $({N}^{*})_{o}g({W_{1}}^{*},{{W}_{2}}^{*})=0$ and therefore by \eqref{Neq}, $[N,-]$ is skew-adjoint on $\mathfrak{b}/\mathfrak{a}.$

Conversely, suppose that $[N,-]$ is skew-adjoint on $\mathfrak{b}/\mathfrak{a}$, then by \eqref{Neq} we see that $$({N}^{*})_{o}g({W_{1}}^{*},{{W}_{2}}^{*})=0,$$ but since $({N}^{*})_{o}$ is colinear with $X_{o}$ it follows from \eqref{eqX} that $$\mathcal{L}_{X}g(({{W}_{1}}^{*})_{o},({{W}_{2}}^{*})_{o})=0,$$ and since the $W_{i}$ were arbitrary this in turn implies that $$\mathcal{L}_{X}g(Y,Y^{'})=0,$$
for all $Y,Y^{'}\in \Lambda_{o}.$ Now suppose that $b\in K$, with $\pi(b)\in U.$  Since $L_{b^{-1}}$ preserves $\lambda,$ it follows that there exists some smooth function $f$ such that $(L_{g^{-1}})_{*}(X)=fX.$ Therefore if $Y,Y^{'}\in \Lambda_{o}$, we have
\begin{equation}
\begin{gathered}
    \mathcal{L}_{X}g((L_{b})_{*}Y,(L_{b})_{*}Y^{'})\\
    =(L_{b})^{*}\mathcal{L}_{X}g(Y,Y^{'})=\mathcal{L}_{fX}((L_{b})^{*}g)(Y,Y^{'})=0,
\end{gathered}
\end{equation}
where the last equality holds since $(L_{b})^{*}g$ belongs to $P$. Thus $X$ satisfies \begin{equation}\mathcal{L}_{X}g(\Lambda,\Lambda)=0,\end{equation} on $U.$
    Now if $Z\in \mathfrak{k}\setminus\mathfrak{b}$, then by shrinking $U$ if necessary, we get that $(Z^{*})_{p}\notin \Lambda_{p},$ for all $p\in U,$ and we can find a vector field $X$ on $U$ belonging to $\lambda$ such that $g(X,Z^{*})$ is constant on $U.$ Then
    \begin{equation}
 \mathcal{L}_{{X}}g({X},{Z}^{*})=Xg(X,Z^{*})-g([{X},{X}],{Z}^{*})-g({X},[{X},{Z}^{*}])=-g({X},[X,Z^{*}])=0,    
 \end{equation}
 Thus $X$ is an infinitesimal automorphism for $P$ on $U.$ If $p\in K/H$, then we may find some $a\in K$ such that $\pi(a)=p.$ Since 
 \begin{equation}
     \mathcal{L}_{(L_{a})_{*}X}((L_{a^{-1}})^{*}g)=(L_{a^{-1}})^{*}\mathcal{L}_{X}g,
 \end{equation}
 it follows that $(L_{a})_{*}(X)$ is an infinitesimal automorphism of $P$ defined on $L_{a}(U)$. Hence if $[N,-]\in End(\mathfrak{b}/\mathfrak{a})$, for all $N\in \mathfrak{a}$, then there are locally defined infinitesimal automorpisms of $P$ about each point of $K/H.$ This concludes the proof.

\end{proof}


Given a Lie group $G$ with Lie algebra $\mathfrak{g}$ we now consider left-invariant $GN$-structures $P\rightarrow G$. By theorem \ref{KSGNHom} these are in $1:1$ correspondece with quadruples $(\mathfrak{a},\mathfrak{b},(\cdot,\cdot),\beta),$ where $\mathfrak{a}$ and $\mathfrak{b}$ are of dimension one and codimension one respectively and satisfy $\mathfrak{a}\subset \mathfrak{b}\subset \mathfrak{g}.$  

\begin{example}
Every 3-dimensional Lie group whose Lie algebra $\mathfrak{g}$ is not isomorphic to $\mathfrak{su}(2)$ admits a left-invariant Kundt structure.

This can be seen as follows: If $\mathfrak{g}$ is not isomorphic to $\mathfrak{su}(2)$, then it has a codimension 1 subalgebra $\mathfrak{b}$. If $\mathfrak{b}$ is abelian, then choose any one-dimensional subspace $\mathfrak{a}\in \mathfrak{b}$. If $\mathfrak{b}$ is non-abelian, then set $\mathfrak{a}=[\mathfrak{b},\mathfrak{b}]$ which is automatically one-dimensional. In either case the induced map $[A,-]:\mathfrak{b}/\mathfrak{a}\rightarrow \mathfrak{b}/\mathfrak{a}$ is zero, for all $A\in \mathfrak{a}.$ Therefore if $(\cdot,\cdot)$ is any quadratic form on $\mathfrak{b}/\mathfrak{a}$ and $\beta:\mathfrak{a}\otimes \mathfrak{g}/\mathfrak{b}\rightarrow \mathbb{R}$ is any non-zero linear function, the quadruple $(\mathfrak{a},\mathfrak{b},(\cdot,\cdot),\beta)$ corresponds to a left-invariant Kundt structure.

$\mathfrak{su}(2)$ does not contain a codimension one subalgebra, and therefore does not support a quaduple corresponding to a Kundt structure.
\end{example}

The next result suggests that in arbitrary dimension left-invariant Kundt structures need not be as ubiquitous.

\begin{proposition}
Suppose that $N$ is a nilpotent Lie group with Lie algebra $\mathfrak{n}.$ If $(\mathfrak{a},\mathfrak{b},(\cdot,\cdot),\beta)$ is a quaduple on $\mathfrak{n}$ corresponding to a left-invariant Kundt structure $P\rightarrow M$, then
\begin{equation}
    [\mathfrak{a},\mathfrak{b}]\subset \mathfrak{a}\quad \text{ and } \quad [\mathfrak{a},\mathfrak{n}]\subset \mathfrak{b}.
\end{equation}
Consequently any left-invariant vector field in $\mathfrak{a}$ is an infinitesimal automorphism of $P.$
\end{proposition}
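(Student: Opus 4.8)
The plan is to combine two facts: the skew-adjointness condition from Theorem \ref{KSGNHom}, which every Kundt-structure quadruple satisfies, together with the structural constraint that nilpotency imposes on the adjoint representation. First I would set up the induced endomorphisms. Fix a generator $A$ of the one-dimensional subalgebra $\mathfrak{a}$ (recall $\dim\mathfrak{a}=1$ since $\mathfrak{h}=0$ for a Lie group). By part $ii)$ of Theorem \ref{KSGNHom}, the map $[A,-]\in\mathrm{End}(\mathfrak{b}/\mathfrak{a})$ is skew-adjoint with respect to the positive-definite inner product $(\cdot,\cdot)$. On the other hand, because $\mathfrak{n}$ is nilpotent, the operator $\mathrm{ad}(A)=[A,-]:\mathfrak{n}\to\mathfrak{n}$ is a nilpotent endomorphism (Engel's theorem), and hence so is the induced map on the subquotient $\mathfrak{b}/\mathfrak{a}$, which is well-defined because part $i)$ gives $[A,\mathfrak{b}]\subset[\mathfrak{b},\mathfrak{b}]\subset\mathfrak{b}$ and $[A,\mathfrak{a}]=0$.

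The key step is then purely linear algebra: a skew-adjoint operator on a positive-definite inner product space that is also nilpotent must vanish. Indeed, if $S$ is skew-adjoint then $S^{*}S=-S^{2}$, so for any vector $w$ we have $(Sw,Sw)=-(S^{2}w,w)$; iterating and using nilpotency shows all eigenvalues are zero while diagonalizability of the normal operator $S$ over $\mathbb{C}$ forces $S=0$. Applying this to $[A,-]$ on $\mathfrak{b}/\mathfrak{a}$ yields $[A,\mathfrak{b}]\subset\mathfrak{a}$, which is the first claimed inclusion $[\mathfrak{a},\mathfrak{b}]\subset\mathfrak{a}$.

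For the second inclusion $[\mathfrak{a},\mathfrak{n}]\subset\mathfrak{b}$, I would argue by contradiction against the nilpotency. Suppose $[A,Z]\notin\mathfrak{b}$ for some $Z\in\mathfrak{n}$; since $\mathfrak{b}$ has codimension one, $\mathrm{ad}(A)$ would then map a complement of $\mathfrak{b}$ nontrivially out of $\mathfrak{b}$. More precisely, the induced endomorphism $\overline{\mathrm{ad}(A)}$ on $\mathfrak{n}/\mathfrak{b}$ (one-dimensional) is multiplication by a scalar, and that scalar is an eigenvalue of the nilpotent operator $\mathrm{ad}(A)$, hence zero. This forces $[A,\mathfrak{n}]\subset\mathfrak{b}$, giving $[\mathfrak{a},\mathfrak{n}]\subset\mathfrak{b}$. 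The final sentence follows immediately: by Proposition \ref{nil} and the definition of a Kundt structure, it suffices that any left-invariant $X\in\mathfrak{a}$ be an infinitesimal automorphism, and the two inclusions just established are exactly the conditions appearing in the proof of Theorem \ref{KSGNHom} guaranteeing $\mathcal{L}_{X}g(\Lambda,\Lambda)=0$ and $\mathcal{L}_{X}g(\lambda,TM)=0$, so $X$ is nil-Killing.

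I expect the main obstacle to be bookkeeping around the subquotient $\mathfrak{b}/\mathfrak{a}$: one must verify that $\mathrm{ad}(A)$ genuinely descends to this quotient (requiring both $[A,\mathfrak{a}]\subset\mathfrak{a}$, which is automatic as $\dim\mathfrak{a}=1$, and $[A,\mathfrak{b}]\subset\mathfrak{b}$, which uses that $\mathfrak{b}$ is a subalgebra from Theorem \ref{KSGNHom}$i)$) and that nilpotency of $\mathrm{ad}(A)$ on $\mathfrak{n}$ passes to the induced map. Once the descent is justified, the "nilpotent plus skew-adjoint implies zero" lemma does all the real work, and the eigenvalue argument on $\mathfrak{n}/\mathfrak{b}$ is the clean way to extract the second inclusion without computing brackets explicitly.
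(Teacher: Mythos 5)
Your proposal is correct and follows essentially the same route as the paper's proof: the first inclusion comes from the fact that the induced map $[A,-]$ on $\mathfrak{b}/\mathfrak{a}$ is both skew-adjoint (Theorem \ref{KSGNHom}) and nilpotent (nilpotency of $\mathfrak{n}$), hence zero; the second comes from nilpotency of the induced map on the one-dimensional quotient $\mathfrak{n}/\mathfrak{b}$; and the final claim follows by checking the nil-Killing conditions for a left-invariant $X\in\mathfrak{a}$ on left-invariant frames and invoking Proposition \ref{nil}. You in fact supply details the paper leaves implicit (the ``skew-adjoint plus nilpotent implies zero'' lemma and the explicit quotient argument replacing the paper's ``standard argument''), while your last paragraph compresses the Lie-derivative computation that the paper writes out, but the substance matches.
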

\begin{proof}
If $A\in \mathfrak{a}$, then $ad(A)$ is nilpotent, and therefore the induced map $ad(A):\mathfrak{b}/\mathfrak{a}\rightarrow \mathfrak{b}/\mathfrak{a}$ is also nilpotent. Since this map is assumed to skew-adjoint it follows that it must be identically zero. This shows that $[A,B]\subset \mathfrak{a}$, for all $B\in \mathfrak{b},$ from which we see that $[\mathfrak{a},\mathfrak{b}]\subset \mathfrak{a}.$

Since $\mathfrak{b}$ is a subalgebra, it follows again from nilpotency by a standard argument that $[\mathfrak{a},\mathfrak{n}]\subset \mathfrak{b}.$

Now let $X$ be a left-invariant vector field on $N,$ such that at the identify $X_e\in \mathfrak{a}$. Then left invariance implies that $X$ belongs to the null-distribution $\lambda$ associated to the Kundt structure. If $g$ is a metric belonging to $P,$ then left-invariance of $(\cdot,\cdot)$ and $\beta$ implies that if $W,W^{\prime}\in \mathfrak{b}$ and $Z\in \mathfrak{n}$, then $g(W,W^{'})$ and $g(X,Z)$ are contant on $N,$ and therefore
\begin{equation}
    \mathcal{L}_{X}g(W,W^{\prime})=-g([X,W],W^\prime)-g(W,[X,W^{\prime}])=0,
\end{equation}
and 
\begin{equation}
    \mathcal{L}_{X}g(X,Z)=-g([X,X],Z)-g(X,[X,Z])=0,
\end{equation}
from which we see that $X$ is nil-Killing w.r.t. $\lambda.$ Hence proposition \ref{nil} implies that $X$ is an infinitesimal automorphism of $P.$
\end{proof}

\section{Conclusion}
In this paper we have identified the $G$-structures to which Kundt and nil-Killing vector fields are infinitesimal automorphisms. The properties of these $G$-structures are studied showing that they give rise to an intrinsic boost-order classification of tensors and the ability to perform full traces of even ranked tensors of type $II$.

Along the way we have shown that the existence of a Kundt vector field can be characterized in terms of the $G$-structure having a torsion-free connection. 

Lastly, our desire to represent Kundt-CSI spacetimes in terms of nil-Killing vector fields has lead us to characterize left-invariant Kundt structures on homogeneous spaces. Here we observed some special restrictions such a structure places upon nilpotent Lie groups.

\section*{Acknowledgements}
I would like to thank Boris Kruglikov, Lode Wylleman and my PhD advisor Sigbjørn Hervik for helpful discussions concerning this project.

\medskip

\bibliography{KundtStructures}

\begin{thebibliography}{10}

\bibitem{NilKilling}
Matthew~Terje Aadne.
\newblock Nil-killing vector fields and type {III} deformations, 2019.

\bibitem{KundtSpacetimes}
A~Coley, S~Hervik, G~Papadopoulos, and N~Pelavas.
\newblock Kundt spacetimes.
\newblock {\em Classical and Quantum Gravity}, 26(10):105016, apr 2009.

\bibitem{CSI}
Alan Coley, Sigbj{\o}rn Hervik, and Nicos Pelavas.
\newblock On spacetimes with constant scalar invariants.
\newblock {\em Classical and Quantum Gravity}, 23(9):3053--3074, apr 2006.

\bibitem{CSI3}
Alan Coley, Sigbj{\o}rn Hervik, and Nicos Pelavas.
\newblock Lorentzian spacetimes with constant curvature invariants in three
  dimensions.
\newblock {\em Classical and Quantum Gravity}, 25(2):025008, dec 2007.

\bibitem{CSI4}
Alan Coley, Sigbj{\o}rn Hervik, and Nicos Pelavas.
\newblock Lorentzian spacetimes with constant curvature invariants in four
  dimensions.
\newblock {\em Classical and Quantum Gravity}, 26(12):125011, may 2009.

\bibitem{CharBySpi}
Alan Coley, Sigbj{\o}rn Hervik, and Nicos Pelavas.
\newblock Spacetimes characterized by their scalar curvature invariants.
\newblock {\em Classical and Quantum Gravity}, 26(2):025013, jan 2009.

\bibitem{SCPI}
Alan Coley, Sigbj{\o}rn Hervik, and Nicos Pelavas.
\newblock Lorentzian manifolds and scalar curvature invariants.
\newblock {\em Classical and Quantum Gravity}, 27(10):102001, apr 2010.

\bibitem{herviknew}
Sigbjørn Hervik.
\newblock On a new class of infinitesimal group actions on pseudo-riemannian
  manifolds, 2018.

\bibitem{KN1}
S.~Kobayashi and K.~Nomizu.
\newblock {\em Foundations of Differential Geometry vol. I}.
\newblock Number v. 1 in A Wiley Publication in Applied Statistics. Wiley,
  1996.

\bibitem{KN2}
S.~Kobayashi and K.~Nomizu.
\newblock {\em Foundations of Differential Geometry Vol. II}.
\newblock Number v. 2 in A Wiley Publication in Applied Statistics. Wiley,
  1996.

\bibitem{IDIFF}
David~Duncan McNutt and Matthew~Terje Aadne.
\newblock I-preserving diffeomorphisms of lorentzian manifolds.
\newblock {\em Journal of Mathematical Physics}, 60(3):032501, 2019.

\bibitem{AlgClass}
R.~MILSON, A.~COLEY, V.~PRAVDA, and A.~PRAVDOVÁ.
\newblock Alignment and algebraically special tensors in lorentzian geometry.
\newblock {\em International Journal of Geometric Methods in Modern Physics},
  02(01):41--61, 2005.

\bibitem{GenKundt}
Ji{\v{r}}{\'{\i}} Podolsk{\'{y}} and Martin {\v{Z}}ofka.
\newblock General kundt spacetimes in higher dimensions.
\newblock {\em Classical and Quantum Gravity}, 26(10):105008, apr 2009.

\bibitem{ExactSolutions}
Hans Stephani, Dietrich Kramer, Malcolm MacCallum, Cornelius Hoenselaers, and
  Eduard Herlt.
\newblock {\em Exact Solutions of Einstein's Field Equations}.
\newblock Cambridge Monographs on Mathematical Physics. Cambridge University
  Press, 2 edition, 2003.

\end{thebibliography}
\bibliographystyle{plain}
\end{document}